\documentclass[letterpaper,twosided,11pt]{article}
\title{Vanishing Cohomology of Dominant Line Bundles for Real Groups} 
\author{Jack A. Cook}
\date{}
\usepackage{authblk}
\usepackage{amssymb}
\usepackage{amsmath}
\usepackage{stmaryrd}
\usepackage{mathrsfs}
\usepackage{hyperref} 
\usepackage{titling}
\usepackage{bbm}
\usepackage{dsfont} 
\usepackage{ bbold }
\usepackage[T1]{fontenc}
\usepackage{palatino}
\usepackage{mathpazo}
\usepackage{marvosym} 
\usepackage{tikz-cd}
\usepackage[margin=1in]{geometry} 
\usepackage{ragged2e}
\usepackage[utf8]{inputenc}
\usepackage[english]{babel} 
\usepackage{blindtext}

\usepackage{amsthm}
\usepackage{pgfplots}
\usepackage{tikz-3dplot}
\usepackage{pgffor}
\usepackage{comment}
\usepackage{setspace}
\usepackage{asymptote}
\usepackage{color}   
\usepackage{hyperref}
\hypersetup{
    colorlinks,
    citecolor=blue,
    filecolor=black,
    linkcolor=black,
    urlcolor=black
}
\usepackage[bottom]{footmisc}
\usepackage{graphicx}

\usetikzlibrary{shapes,calc,positioning}
\usetikzlibrary{calc,intersections}
\usetikzlibrary{bending}
	
\theoremstyle{plain} 

\newtheorem*{ntheorem}{Theorem}
\newtheorem*{nlemma}{Lemma}

\newtheorem{theorem}{Theorem}[section]

\newtheorem{lemma}[theorem]{Lemma}
\newtheorem{proposition}[theorem]{Proposition}
\newtheorem{corollary}[theorem]{Corollary}

\theoremstyle{definition}
\newtheorem{definition}{Definition}
 
\newtheorem{example}{Example}

\theoremstyle{remark} 
\newtheorem{remark}{Remark}


\newcommand{\C}{\mathbb{C}}

\newcommand{\HH}{\mathbb{H}}

\newcommand{\R}{\mathbb{R}}
\newcommand{\Z}{\mathbb{Z}}

\newcommand{\KSpRes}{\widetilde{\mathcal{N}_\theta}}


\newcommand{\close}[1]{\overline{#1}}
\newcommand{\ds}{\oplus}
\newcommand{\tensor}{\otimes}
\newcommand{\Sym}{\operatorname{Sym}}
\newcommand{\im}{\operatorname{Im }}


\newcommand{\Hom}{\operatorname{Hom}}
\newcommand{\Rep}{\operatorname{Rep}}









\newcommand{\dWts}[1]{\mathbb{X}_+^*(#1)}

\newcommand{\lie}[1]{\mathfrak{#1}}

\newcommand{\ad}{\operatorname{ad}}
\newcommand{\Ad}{\operatorname{Ad}}

\usetikzlibrary{hobby}

\begin{document}
\maketitle

\begin{abstract} 
	In \cite{Broer1993}, it was shown that certain line bundles on $\widetilde{\mathcal{N}}=T^*G/B$ have vanishing higher cohomology. We prove a generalization of this theorem for real reductive algebraic groups. More specifically, if $\mathcal{N}_\theta$ denotes the cone of nilpotent elements in a Cartan subspace $\lie{p},$ we have a similar construction of a resolution of singularities $\KSpRes.$  We prove that for a certain cone of weights $H^i(\KSpRes,\mathcal{O}_{\KSpRes}(\lambda))=0$ for $i> 0.$ This follows by combining a simple calculation of the canonical bundle for $\KSpRes$ with Grauert--Riemenschneider vanishing. Restricting to the structure sheaf, we get a characterization of the singularities of the normalization of $\mathcal{N}_\theta.$ We use this to show that for groups of QCT (Definition \ref{Quasi_complex}), $\C[\mathcal{N}_\theta]$ is equivalent as a $K$-representation to a certain cohomologically induced module giving a new proof of a result in $\cite{KostantRallis1971}.$   
\end{abstract}

\tableofcontents

\section{Introduction}
	Let $G$ be a connected complex reductive algebraic group. Fix a real form $G_\R.$ Let $\theta$ be the Cartan involution of $G$ (with respect to $G_\R$) and $K_\theta=G^\theta$ the set of fixed points and the associated Cartan decomposition of the Lie algebra $\lie{g}=\lie{k}\ds \lie{p}.$ By observing the bracket relation $[\lie{k},\lie{p}]\subseteq \lie{p}$, it is clear that $K_\theta$ acts on $\lie{p}$ via the adjoint representation. By \cite[Theorem 2]{KostantRallis1971}, there are finitely many $K_\theta$-orbits of this action on the subvariety $\mathcal{N}_\theta\subseteq \lie{p}$ of $\ad$-nilpotent elements. Note that $K_\theta$ may be disconnected even though $G$ is connected. For the purposes of the proofs presented here we will deal with $K:=(K_\theta)_0$ the identity component of $K_\theta.$ 
	
	Under these assumptions, $K$ will have finitely many orbits of maximal dimension on $\mathcal{N}_\theta.$ These are the irreducible components of $\mathcal{N}_\theta$. Pick a single $K$-orbit (irreducible component) which we shall call a principal $K$-orbit and denote it $\mathcal{O}_{prin}^K$ or $\mathcal{O}_{prin}$ if the group is not ambiguous. We call any element $X\in \mathcal{O}_{prin}^K$ a \textit{principal nilpotent element} or principal for short. Now, fix any $X\in \mathcal{O}_{prin}^K.$ As $X\in \lie{p}\subset \lie{g}$ we may also consider the $G$-orbit $\mathcal{O}^G_X=G\cdot X.$ A principal $K$-orbit is a Lagrangian subvariety in the $G$-orbit $\mathcal{O}_X^G$ (\cite[Corollary 5.18]{Vogan1991}). Note that the $G$-nilpotent cone $\mathcal{N}$ has its own principal open orbit which we denote $\mathcal{O}_{prin}^G$. The orbit $\mathcal{O}_X^G$ need \textit{not} be the principal $G$-orbit (in fact it is generally not so). Further, \[ \mathcal{N}_\theta=\{ \xi\in \mathcal{N}: \theta\xi=-\xi \}=\mathcal{N}\cap \lie{p} \] Denote by $\mathcal{N}_\theta':=\close{K\cdot X}=\close{\mathcal{O}_{prin}^K}.$ This is our variety of interest.  
		
	By way of the Jacobson-Morozov theorem, we may complete $X$ to an $\lie{sl}(2,\C)$-triple $\{H,X,Y\}$ which can be chosen such that $H\in \lie{k}$ \cite[Proposition 4]{KostantRallis1971}. Further, as $X$ is principal nilpotent  it is shown in the discussion before Proposition 13 in loc. cit. that $\ad H$ has only even integer eigenvalues. Decomposing $\lie{g}$ into $\ad H$-eigenspaces $\lie{g}_i$, we obtain that $\lie{g}=\sum_{i\in \Z} \lie{g}_{2i}.$ We define three subalgebras \begin{align*} \lie{q}=\bigoplus_{i\geq 0} \lie{g}_i && \lie{l}=\lie{g}_0 && \lie{u}=\bigoplus_{i>0} \lie{g}_i.  \end{align*}  $\lie{q}$ is a parabolic subalgebra of $\lie{g}$ with Levi decomposition $\lie{q}=\lie{l}\ds \lie{u}.$ Note that $X\in \lie{g}_2\subseteq \lie{u}.$
	 	
	It is now a somewhat non-trivial result that $Q=LU$ (the connected subgroups of $G$ corresponding to $\lie{q}= \lie{l}\ds \lie{u}$) has a single open dense orbit in $\lie{u}$ \cite[Lemma 4.1.4]{Collingwood1993}. Define $\widetilde{\mathcal{O}^G_X}=G\times_Q \lie{u}.$ Using this open dense $Q$ orbit on $\lie{u}$, we obtain a resolution of singularities of the closure $\close{\mathcal{O}_X^G}$ \[\begin{tikzcd} \widetilde{\mathcal{O}^G_X}\arrow[d,swap,"\mu"]\arrow[r,"\pi"]  & G/Q \\ \close{\mathcal{O}_X^G}    \end{tikzcd}  \]
The vertical arrow is given by the adjoint action map, is birational, and is an isomorphism over $\mathcal{O}$.   

Now let $\xi\in \Rep(Q)$ be any finite dimensional representation of $Q.$ Then define \[W_\xi=G\times_Q \xi^*\] to be the homogeneous vector bundle over $G/Q.$ If $\xi$ is one-dimensional, then we denote the sheaf of sections of this line bundle as $\mathcal{O}_{G/Q}(\xi).$ Denote the set of characters of $L$ as $\mathbb{X}^*(L).$ As pulling back sheaves preserves rank, we have a family of rank one $G$-equivariant sheaves on $\widetilde{\mathcal{O}_X^G}$ which we will denote by \[ \{\mathcal{O}_{\widetilde{\mathcal{O}}}(\xi)\; :\;  \xi\in \mathbb{X}^*(L)\}\]
     
Notice further that if $X$ is $G$-principal nilpotent (see discussion after Lemme 5.2 in \cite{Kostant1959} for a definition), then $Q=B$ is a Borel subgroup and $\lie{u}=\lie{n}.$ Thus $\widetilde{\mathcal{O}}=\widetilde{\mathcal{N}}=T^*(G/B)$ is the cotangent bundle and the moment map here is the \textit{Springer Resolution}.  

Broer in \cite{Broer1993}, proves the following vanishing result for this resolution: 

\begin{ntheorem} 
	Let $B=HN$ be a Levi decomposition of the Borel subgroup defined by $X$. Let $\lambda\in \mathbb{X}^*(H)$ be dominant (with respect to $B$).  Extend this to a one dimensional representation of $B.$ Then \[H^i(\widetilde{\mathcal{N}}, \mathcal{O}_{\widetilde{\mathcal{N}}}(\lambda))=0\;\;\; i>0. \] 
	\end{ntheorem}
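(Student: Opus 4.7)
The plan is to follow the strategy hinted in the abstract: combine a canonical bundle calculation with a (twisted) form of Grauert--Riemenschneider vanishing. For the first step, I would compute $\omega_{\widetilde{\mathcal{N}}}$. Since $\widetilde{\mathcal{N}} = T^*(G/B)$ is the total space of a cotangent bundle, the canonical symplectic $2$-form trivializes the top exterior power, giving $\omega_{\widetilde{\mathcal{N}}} \cong \mathcal{O}_{\widetilde{\mathcal{N}}}$. Equivalently, the relative tangent sequence of $\pi: T^*(G/B) \to G/B$ shows that the fiber direction contributes $\pi^* \omega_{G/B}^{-1}$ while the base contributes $\pi^* \omega_{G/B}$, and the two cancel.

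Next, I would verify positivity of $\mathcal{O}_{\widetilde{\mathcal{N}}}(\lambda)$ for $\lambda$ dominant. By Borel--Weil, $\mathcal{O}_{G/B}(\lambda)$ is globally generated, hence nef on $G/B$; its pull-back $\mathcal{O}_{\widetilde{\mathcal{N}}}(\lambda) = \pi^* \mathcal{O}_{G/B}(\lambda)$ is therefore nef on $\widetilde{\mathcal{N}}$, and in particular $\mu$-nef for the Springer resolution $\mu: \widetilde{\mathcal{N}} \to \mathcal{N}$. Since $\mu$ is birational, the generic fiber is a point, so $\mathcal{O}_{\widetilde{\mathcal{N}}}(\lambda)$ is trivially $\mu$-big as well.

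With these in hand, the twisted form of Grauert--Riemenschneider (equivalently, relative Kawamata--Viehweg, or a Koll\'ar--type vanishing) delivers
\[ R^i \mu_*\bigl(\omega_{\widetilde{\mathcal{N}}} \otimes \mathcal{O}_{\widetilde{\mathcal{N}}}(\lambda)\bigr) = 0 \quad\text{for } i > 0, \]
which, by the triviality of $\omega_{\widetilde{\mathcal{N}}}$, reads $R^i \mu_* \mathcal{O}_{\widetilde{\mathcal{N}}}(\lambda) = 0$. Combined with the affineness of $\mathcal{N}$, the Leray spectral sequence for $\mu$ then collapses to
\[ H^i(\widetilde{\mathcal{N}}, \mathcal{O}_{\widetilde{\mathcal{N}}}(\lambda)) \;=\; H^i(\mathcal{N}, \mu_* \mathcal{O}_{\widetilde{\mathcal{N}}}(\lambda)) \;=\; 0 \quad\text{for } i > 0, \]
which is the desired conclusion.

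The main obstacle is pinning down the right form of the vanishing theorem. The bare Grauert--Riemenschneider statement $R^i \mu_* \omega_{\widetilde{\mathcal{N}}} = 0$ only handles $\lambda = 0$; extending to nontrivial dominant $\lambda$ requires the twisted version, whose nef--and--big hypothesis is exactly what dominance of $\lambda$ supplies through Borel--Weil. The canonical bundle computation and the passage from relative to absolute vanishing via affineness of $\mathcal{N}$ are then structural.
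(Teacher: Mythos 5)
The paper itself does not prove this statement; it quotes Broer's theorem and cites \cite{Broer1993} and \cite{Broer1994}. The nearest in-house comparison is the proof the paper gives of the $K$-analogue (Theorem \ref{Vanishing_of_Higher_Cohomology}), which runs through the auxiliary-bundle trick of \cite{Broer1994}: instead of twisting by $\mathcal{O}(\lambda)$ and invoking a twisted vanishing theorem, one enlarges the resolution to $Y=K\times_{Q_K}[(\lie{u}\cap\lie{p})\ds\C_{\lambda'}]$, computes $\omega_Y$ (Theorem \ref{Canonical_Bundle}), shows $Y$ admits a proper generically finite morphism to an affine variety $\mathcal{N}_\theta\times V_{\lambda'}$, and then applies the \emph{untwisted} Kempf form of Grauert--Riemenschneider. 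The projection formula then peels off the factor $\Sym^\bullet\C_{\lambda'}^*$ and recovers the desired vanishing of $H^i(\KSpRes',\mathcal{O}_{\KSpRes'}(\lambda'))$ as one graded piece. In the $G/B$ setting this is exactly Broer's simpler 1994 proof of the theorem you are considering.

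Your route is genuinely different: you keep the Springer resolution as is and appeal to the relative (twisted) Kawamata--Viehweg theorem, with nefness of $\pi^*\mathcal{O}_{G/B}(\lambda)$ supplied by Borel--Weil global generation and $\mu$-bigness automatic from birationality. The canonical-bundle computation, the passage from $R^i\mu_*=0$ to $H^i=0$ via affineness of $\mathcal{N}$, and the positivity input are all correct. The one place you should be careful --- and you flag it yourself --- is the exact form of the vanishing theorem invoked. The absolute Kawamata--Viehweg statement requires a projective ambient variety, which $\widetilde{\mathcal{N}}$ is not; what you actually need is the relative form for a \emph{projective morphism} from a smooth quasi-projective source (e.g., Kawamata--Matsuda--Matsuki, Theorem 1-2-3, or Fujino's treatment), applied to $\mu:\widetilde{\mathcal{N}}\to\mathcal{N}$, which is a projective birational morphism. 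Once you cite the correct version, your argument is complete. The trade-off relative to the paper's method is clear: you use a stronger off-the-shelf vanishing theorem and get a shorter proof; the paper (following Broer 1994) stays with the elementary Kempf version at the cost of introducing the auxiliary bundle and a projection-formula bookkeeping argument. Both are correct, and the auxiliary-bundle route has the advantage of transporting cleanly to the $K$-equivariant setting where an appeal to Kawamata--Viehweg would require re-checking equivariant hypotheses.
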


By the Grothendieck spectral sequence, this result is equivalent to saying that \[ H^0(\mathcal{N}, R^i\mu_*\mathcal{O}_{\widetilde{\mathcal{N}}}(\lambda))=0\;\;\; i>0. \] 

In a later paper \cite{Broer1994}, Broer reproves the above theorem with simpler methods. In doing so, he gives a simple proof of the following: 
\begin{nlemma} 
	Let $\omega_{\widetilde{\mathcal{N}}}$ denote the canonical bundle of the contangent bundle. Then \[ \omega_{\widetilde{\mathcal{N}}}\simeq \mathcal{O}_{\widetilde{\mathcal{N}}}.\]
	\; \\
\end{nlemma}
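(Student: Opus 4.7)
The plan is to deduce the triviality of $\omega_{\widetilde{\mathcal{N}}}$ from the general fact that the canonical bundle of the total space of \emph{any} cotangent bundle is trivial, and then verify that the trivialization is $G$-equivariant in our setting.

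First, I would set $\pi:\widetilde{\mathcal{N}}=T^*(G/B)\to G/B$ to be the bundle projection. Since $\pi$ is a vector bundle, its relative tangent bundle is canonically identified with the pullback of the fiber, i.e.\ $T_{\widetilde{\mathcal{N}}/(G/B)}\simeq \pi^*T^*(G/B)$. This yields the short exact sequence
\[
0\to \pi^*T^*(G/B)\to T\widetilde{\mathcal{N}}\to \pi^*T(G/B)\to 0.
\]
Dualizing and taking top exterior powers gives
\[
\omega_{\widetilde{\mathcal{N}}}\;\simeq\;\pi^*\bigl(\omega_{G/B}\otimes \det T(G/B)\bigr)\;=\;\pi^*\bigl(\omega_{G/B}\otimes \omega_{G/B}^{-1}\bigr)\;\simeq\;\mathcal{O}_{\widetilde{\mathcal{N}}}.
\]
The identification of the middle term is canonical (it is just the tautological pairing between a line bundle and its inverse), so the resulting isomorphism is automatically $G$-equivariant.

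As a sanity check against the homogeneous bundle description $\widetilde{\mathcal{N}}=G\times_B \lie{n}$: at the identity coset, the cotangent space of $G/B$ is $(\lie{g}/\lie{b})^*\simeq \lie{n}$ as a $B$-representation, so $\omega_{G/B}$ corresponds to the character $2\rho$ while $\det T(G/B)$ corresponds to $-2\rho$; their tensor product is the trivial character, confirming that the $B$-character defining the line bundle $\omega_{\widetilde{\mathcal{N}}}$ is indeed trivial.

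I do not expect a real obstacle here — the argument is one line once the exact sequence is in place. The only subtle point is bookkeeping the equivariant structures (i.e.\ checking that the canonical isomorphism $L\otimes L^{-1}\simeq \mathcal{O}$ respects the $G$-action), but since every map in the construction is pulled back from a functorial operation on sheaves, this is automatic. An alternative route, should one prefer a more geometric formulation, is to note that $T^*(G/B)$ carries its canonical symplectic form $\omega=d\lambda$ (with $\lambda$ the Liouville one-form), and that the top power $\omega^{\dim G/B}$ is a nowhere-vanishing, $G$-invariant section of $\omega_{\widetilde{\mathcal{N}}}$, which trivializes it equivariantly.
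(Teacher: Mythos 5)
Your proof is correct, and the route you take — the relative tangent (or cotangent) short exact sequence for the bundle projection $\pi:T^*(G/B)\to G/B$, followed by taking top exterior powers and comparing the $B$-characters of the fiber at the identity coset — is essentially the same computation the paper uses in its generalization (Theorem \ref{Canonical_Bundle}), just in the unconstrained $G$-case where the two characters $2\rho$ and $-2\rho$ cancel exactly. The paper states this lemma as a citation to Broer rather than proving it, but its own proof of the $K$-equivariant analogue proceeds via the dual exact sequence $0\to \pi_K^*\mathcal{L}(\lie{u}\cap\lie{p})^*\to\Omega_{T^*_SX_Q}\to\pi_K^*\mathcal{L}(\lie{u}\cap\lie{k})\to 0$ and the identical fiber-at-identity bookkeeping, so your sanity check is not merely a check — it is precisely the argument the paper runs. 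Your alternative argument via the Liouville form is a nice additional observation, equally valid, and in fact makes the $G$-invariance of the trivializing section manifest.
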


	We now return to the $K$-setting. As we pick $X\in \lie{p},$ we may choose $H\in \lie{k}$ and thus the resolution above restricts to an associated resolution \[\begin{tikzcd} \KSpRes'=K\times_{Q\cap K} (\lie{u}\cap \lie{p}) \arrow[d,swap, "\mu_K"]\arrow[r, "\pi_K"]  & K/Q\cap K \\ \mathcal{N}_\theta'    \end{tikzcd}  \]
	The vertical arrow here is again the adjoint action. We call this resolution the $K$-\textit{Springer Resolution}. 
	
	Let $B=HN$ be a $\theta$-stable Borel subgroup of $G$ contained in $Q.$ Then $B\cap K$ is a Borel subgroup in $K;$ in particular $T:=H^\theta$ is a Cartan subgroup of $K.$ Let $\mathbb{X}^*(T)$ denote the character lattice of $T.$ $B\cap K$ determines a set of dominant characters (weights) which we will denote $\mathbb{X}_+^*(T).$ For each $\lambda \in  \dWts{T},$  let $V_\lambda$ be the irreducible finite dimensional representation of $K$ of highest weight $\lambda.$  
	
	\begin{definition} 
		We say a weight $\lambda\in \mathbb{X}_+^*(T)$ is $Q\cap K$-\textbf{dominant} if there is a one-dimensional subspace stabilized by $Q\cap K$ in $V_\lambda.$  Denote the monoid of all $Q\cap K$-dominant weights as $\mathbb{W}(Q\cap K).$    
	\end{definition}

	We now have the main results. 

	\begin{theorem}[Corollary \ref{Nilpotent_Orbit_Case}] 
		If $Q=LU$ denotes the $\theta$-stable parabolic associated to a principal nilpotent element $X$, then \[\omega_{\KSpRes}\simeq \mathcal{O}_{\KSpRes}(2\rho(\lie{u}\cap \lie{p})-2\rho(\lie{u}\cap \lie{k}))   \]
		where $2\rho(-)$ means the sum of positive weights of $T$ on the given space. 
	\end{theorem}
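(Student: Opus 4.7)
The plan is to exploit the vector-bundle structure $\pi_K\colon \KSpRes \to K/(Q\cap K)$ with fiber $\lie{u}\cap\lie{p}$, reducing the claim to a computation of $T$-characters of one-dimensional $(Q\cap K)$-representations.

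Let $\mathcal{E}$ denote the locally free sheaf on $K/(Q\cap K)$ whose total space is $\KSpRes$. For any vector bundle $\pi\colon E\to X$, the relative tangent bundle of the total space is canonically $\pi^*E$, giving the short exact sequence
\[
0 \to \pi^*E \to T_E \to \pi^*T_X \to 0.
\]
Taking top exterior powers yields $\omega_E \simeq \pi^*\bigl(\omega_X \tensor (\det E)^{-1}\bigr)$. Applied to our setting,
\[
\omega_{\KSpRes} \simeq \pi_K^*\bigl(\omega_{K/(Q\cap K)} \tensor (\det \mathcal{E})^{-1}\bigr),
\]
so the problem reduces to identifying these two $(Q\cap K)$-equivariant line bundles on the partial flag variety $K/(Q\cap K)$.

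Next I would read off characters. A one-dimensional $(Q\cap K)$-representation is determined by its $T$-character, as the unipotent radical acts trivially. The tangent bundle of $K/(Q\cap K)$ at the identity coset has fiber $\lie{k}/(\lie{q}\cap\lie{k})$, whose $T$-weights are those of the opposite nilradical $\overline{\lie{u}}\cap\lie{k}$, summing to $-2\rho(\lie{u}\cap\lie{k})$; dualizing, $\omega_{K/(Q\cap K)}$ has character $2\rho(\lie{u}\cap\lie{k})$. The fiber of $\det\mathcal{E}$ at the identity coset is $\det(\lie{u}\cap\lie{p})$, of character $2\rho(\lie{u}\cap\lie{p})$, so its inverse has character $-2\rho(\lie{u}\cap\lie{p})$. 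Assembling, $\omega_{\KSpRes}$ corresponds to the $T$-character $2\rho(\lie{u}\cap\lie{k}) - 2\rho(\lie{u}\cap\lie{p})$.

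Finally, I would translate into the notation $\mathcal{O}_{\KSpRes}(\cdot)$. Following the convention of the introduction, $W_\xi = G\times_Q \xi^*$, the line bundle $\mathcal{O}(\xi)$ has fiber of $T$-character $-\xi$; the sign therefore flips and I recover $\omega_{\KSpRes} \simeq \mathcal{O}_{\KSpRes}(2\rho(\lie{u}\cap\lie{p}) - 2\rho(\lie{u}\cap\lie{k}))$ as desired. The main obstacle is merely bookkeeping: there is no deep geometric input, but one must carefully track the duals appearing in the definition of $\mathcal{O}(\xi)$ and in the passage from the tangent to the canonical bundle. As a sanity check, when $G=K$ and $Q=B$ one has $\lie{u}\cap\lie{p} = \lie{u}\cap\lie{k} = \lie{n}$, so the character vanishes and one recovers Broer's lemma $\omega_{\widetilde{\mathcal{N}}} \simeq \mathcal{O}_{\widetilde{\mathcal{N}}}$.
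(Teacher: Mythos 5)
Your proof is correct and follows essentially the paper's own argument: use the (co)tangent short exact sequence for the total space of the homogeneous vector bundle $\KSpRes\to K/(Q\cap K)$, take determinants, and compare $T$-characters at the identity coset, with the final sign flip absorbed by the convention $W_\xi = G\times_Q\xi^*$ (equivalently $\mathcal{O}_{\KSpRes}(\mu)=\pi_K^*\mathcal{L}_{K/Q_K}(\C^*_\mu)$). One small slip in the closing sanity check: setting $G=K$ forces $\lie{p}=0$; the intended degenerate case is $G_\R$ complex, where $\lie{u}\cap\lie{p}$ and $\lie{u}\cap\lie{k}$ have the same $T$-character and the shift cancels, recovering Broer's $\omega_{\widetilde{\mathcal{N}}}\simeq\mathcal{O}_{\widetilde{\mathcal{N}}}$.
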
   
	
	This follows from a more general computation for all conormal bundles to $K$-orbits on partial flag varieties. We may use the preceding result and combine it with Grauert-Riemenschnieder vanishing to obtain the main theorem: 
	
	\begin{theorem}[Theorem \ref{Vanishing_of_Higher_Cohomology}] 
		For $\lambda'=\lambda-2\rho(\lie{u}\cap \lie{p})+2\rho(\lie{u}\cap \lie{k}) \in \mathbb{W}(Q\cap K)$  we have \[H^i(\KSpRes',\mathcal{O}_{\KSpRes'}(\lambda'))=0 \text{ for } i>0 \]
	\end{theorem}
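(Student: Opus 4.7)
The plan is to combine the canonical bundle formula from Theorem 1.2 with a relative, twisted form of Grauert--Riemenschneider vanishing applied to the $K$-Springer map $\mu_K : \KSpRes' \to \mathcal{N}_\theta'$.

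First I would rewrite the sheaf as a twist of the canonical bundle. Setting $\lambda = \lambda' + 2\rho(\lie{u}\cap\lie{p}) - 2\rho(\lie{u}\cap\lie{k}) \in \mathbb{W}(Q\cap K)$, Theorem 1.2 gives
\[
\mathcal{O}_{\KSpRes'}(\lambda') \;\simeq\; \omega_{\KSpRes'} \otimes \mathcal{O}_{\KSpRes'}(\lambda).
\]
Next I would reduce cohomology on $\KSpRes'$ to a higher direct image vanishing on $\mathcal{N}_\theta'$. Because $\mathcal{N}_\theta' \subset \lie{p}$ is a closed subvariety of an affine space it is affine, so $H^p(\mathcal{N}_\theta', \mathcal{F}) = 0$ for any coherent $\mathcal{F}$ and $p > 0$. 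The Leray spectral sequence for $\mu_K$ collapses to
\[
H^i(\KSpRes', \mathcal{O}_{\KSpRes'}(\lambda')) \;=\; H^0\bigl(\mathcal{N}_\theta',\, R^i\mu_{K*}\mathcal{O}_{\KSpRes'}(\lambda')\bigr),
\]
so it suffices to prove that $R^i\mu_{K*}(\omega_{\KSpRes'} \otimes \mathcal{O}_{\KSpRes'}(\lambda)) = 0$ for $i > 0$.

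I would then invoke the relative Kawamata--Viehweg / twisted Grauert--Riemenschneider vanishing: for a proper morphism $f : X \to Y$ with $X$ smooth and $L$ an $f$-nef, $f$-big line bundle, $R^i f_*(\omega_X \otimes L) = 0$ for $i > 0$. Since $\mu_K$ is birational, $\mu_K$-bigness of any line bundle is automatic; the only non-trivial condition is $\mu_K$-nefness of $\mathcal{O}_{\KSpRes'}(\lambda)$. To verify this I would prove the stronger statement that $\mathcal{O}_{\KSpRes'}(\lambda)$ is globally generated. Indeed $\mathcal{O}_{\KSpRes'}(\lambda) = \pi_K^* \mathcal{O}_{K/(Q\cap K)}(\lambda)$, and the hypothesis $\lambda \in \mathbb{W}(Q\cap K)$ provides a $Q\cap K$-stable line in $V_\lambda$; by Frobenius reciprocity this yields a nonzero $K$-equivariant section of $\mathcal{O}_{K/(Q\cap K)}(\lambda)$. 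Since $K$ acts transitively on the partial flag variety $K/(Q\cap K)$, this section and its $K$-translates trivialize the line bundle at every point, so the pullback to $\KSpRes'$ is globally generated, hence nef, hence $\mu_K$-nef.

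The main technical obstacle is selecting and correctly citing the appropriate twisted form of Grauert--Riemenschneider vanishing for this quasi-projective birational setting; the classical statement only addresses the untwisted case $L = \mathcal{O}_X$, and one must be careful that the nef-twist refinement applies when the target $\mathcal{N}_\theta'$ is affine rather than projective. Once this is in hand, the remaining ingredients -- the canonical bundle identification, the affine Leray collapse, and the global generation coming from $\mathbb{W}(Q\cap K)$ -- are essentially bookkeeping.
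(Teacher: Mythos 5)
Your overall strategy---replace Broer's $Y$-bundle trick with a direct application of relative twisted Grauert--Riemenschneider/Kawamata--Viehweg vanishing to $\mu_K$, checking the nefness hypothesis via global generation of the $Q\cap K$-dominant line bundle---is a genuine and arguably cleaner alternative to the paper's argument. The paper instead builds the auxiliary total space $Y=K\times_{Q_K}[(\lie{u}\cap\lie{p})\oplus\C_{\lambda'}]$, applies Kempf's \emph{untwisted} Grauert--Riemenschneider theorem to $Y$, and then unwinds the cohomology of $Y$ via the projection formula and the $\Sym^{\bullet}$-decomposition of $\pi_{Y*}\mathcal{O}_Y$. Your route avoids the auxiliary space at the cost of needing the twisted relative vanishing statement, which is available since $\mu_K$ is projective (shown in the paper) and birational.

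However, your step 1 has a sign error that propagates. Writing $\delta := 2\rho(\lie{u}\cap\lie{p}) - 2\rho(\lie{u}\cap\lie{k})$ for the weight of $\omega_{\KSpRes'}$, the theorem sets $\lambda' = \lambda - \delta$, so $\lambda = \lambda' + \delta$ (as you say). But then
\[
\omega_{\KSpRes'}\otimes\mathcal{O}_{\KSpRes'}(\lambda) \;=\; \mathcal{O}_{\KSpRes'}(\delta + \lambda)\;=\;\mathcal{O}_{\KSpRes'}(\lambda' + 2\delta),
\]
which is \emph{not} $\mathcal{O}_{\KSpRes'}(\lambda')$. The identity that actually holds is $\mathcal{O}_{\KSpRes'}(\lambda)\simeq\omega_{\KSpRes'}\otimes\mathcal{O}_{\KSpRes'}(\lambda')$. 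Consequently, the weight you should feed into the global-generation/nefness check is $\lambda'$ (the one the theorem assumes lies in $\mathbb{W}(Q\cap K)$), not $\lambda$, and the sheaf whose higher cohomology you can kill by twisted GR is $\mathcal{O}_{\KSpRes'}(\lambda)$. In fact this is exactly what the paper's own projection-formula computation yields: the $n=0$ summand there (once its evident typos $\lambda'\leftrightarrow\lambda$ are corrected) is $H^i(\KSpRes',\mathcal{O}_{\KSpRes'}(\lambda))$, not $\mathcal{O}_{\KSpRes'}(\lambda')$. So once you swap the roles of $\lambda$ and $\lambda'$ in your canonical-bundle rewrite and in the nefness verification, your argument goes through and proves the statement that the paper's proof actually establishes; as currently written the proof is trying to verify nefness of a line bundle that the hypotheses give no control over.
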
 
As a consequence of this, we have the following characterization of singularities in the normalization of $\mathcal{N}_\theta.$ 

\begin{theorem}[Corollary \ref{Rational_normal} ]
	The normalization $\mathcal{N}_\theta^\nu$ of the nilpotent cone has rational singularities. 
\end{theorem}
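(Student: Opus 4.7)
The plan is to apply the main vanishing theorem (Theorem \ref{Vanishing_of_Higher_Cohomology}) with the trivial weight $\lambda' = 0$, which is automatically $Q \cap K$-dominant (the one-dimensional trivial representation is fixed by any subgroup), yielding $H^i(\KSpRes',\mathcal{O}_{\KSpRes'}) = 0$ for $i > 0$, and then to transport this to vanishing of higher direct images under the map to the normalization.

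By the theorem of Kostant--Rallis, $\mathcal{N}_\theta$ is the union of the closures of its finitely many principal $K$-orbits, and these closures are precisely its irreducible components. Normalization is insensitive to how we assemble the components---$\mathcal{N}_\theta^\nu$ is the disjoint union of the normalizations $\close{K \cdot X_i}^\nu$ of the components---so it suffices to treat one component at a time. Fixing a principal $X$ and its associated $K$-Springer resolution $\mu_K : \KSpRes' \to \mathcal{N}_\theta'$, the variety $\KSpRes'$ is smooth (hence normal) and $\mu_K$ is proper and birational, so its Stein factorization takes the form $\KSpRes' \xrightarrow{\tilde{\mu}_K} (\mathcal{N}_\theta')^\nu \xrightarrow{\nu} \mathcal{N}_\theta'$, using Zariski's main theorem to identify the Stein base $\Spec_{\mathcal{N}_\theta'}(\mu_{K,*}\mathcal{O}_{\KSpRes'})$ with the normalization. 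The induced map $\tilde{\mu}_K$ is itself proper and birational, i.e., a resolution of the normal variety $(\mathcal{N}_\theta')^\nu$.

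To conclude that $(\mathcal{N}_\theta')^\nu$ has rational singularities I need the vanishing $R^i \tilde{\mu}_{K,*}\mathcal{O}_{\KSpRes'} = 0$ for $i > 0$. Since $\mathcal{N}_\theta' \subset \lie{p}$ is affine and $\nu$ is finite, $(\mathcal{N}_\theta')^\nu$ is affine as well; these higher direct images are coherent by properness; and on an affine scheme a coherent sheaf is zero iff its global sections are. The Leray spectral sequence for $\tilde{\mu}_K$ with affine target degenerates, giving $H^0((\mathcal{N}_\theta')^\nu, R^i\tilde{\mu}_{K,*}\mathcal{O}_{\KSpRes'}) = H^i(\KSpRes',\mathcal{O}_{\KSpRes'})$, which vanishes for $i > 0$ by the opening reduction. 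This gives the corollary.

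The main obstacle is conceptual rather than technical---one just needs to recognize that the $\lambda' = 0$ case of the main theorem is the relevant one, identify the Stein base of $\mu_K$ with the normalization via Zariski's main theorem, and use affineness of the base to convert the global cohomology vanishing (which the main theorem produces through Grauert--Riemenschneider) into sheaf-level vanishing of higher direct images.
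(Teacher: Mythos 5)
Your proposal is correct and follows essentially the same route as the paper: the paper also takes $\lambda' = 0$ (phrased there as $\lambda = 2\rho(\lie{u}\cap\lie{p}) - 2\rho(\lie{u}\cap\lie{k})$), reduces to irreducible components via the fact that normalization commutes with disjoint union of components, and passes from global cohomology vanishing on $\KSpRes$ to $R^i(\mu_K)_*\mathcal{O} = 0$ using the affineness of the target. Your treatment is a bit more explicit than the paper's in invoking Stein factorization and Zariski's main theorem to identify $\mu_{K,*}\mathcal{O}_{\KSpRes'}$ with the structure sheaf of the normalization, a step the paper simply asserts.
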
 

	Note that we are simply picking a single principal $K$-orbit in $\mathcal{N}_\theta$ for adjoint groups. We similarly could have proven this result for any $K$-orbit utilizing the resolution of singularities constructed in Section 3. All of the results hold in this more generic context as well.   \\

\textbf{Acknowledgements}: This paper emerged from the author's Ph.D Thesis. I thank Peter Trapa for introducing me to the orbit method and Lucas Mason-Brown for inspiring this particular result and identifying errors in earlier versions. I am also grateful to David Vogan for explaining the branching rules needed in the later parts of this work.

\section{Background from Nilpotent Orbits} 
	 Let $G$ be a connected complex reductive algebraic group. Pick a coadjoint nilpotent $G$-orbit $\mathcal{O}_\lambda$ for some $\lambda\in \mathcal{N}^*.$ As $\lie{g}$ is reductive, there exists a $G$-invariant non-degenerate bilinear form $\psi$ on $\lie{g}$ giving a canonical isomorphism $\lie{g}\cong \lie{g}^*.$ Therefore, $\lambda\leftrightarrow X_\lambda$ is a bijection between $\mathcal{N}^*$ and $\mathcal{N}$ the cone of nilpotent elements in $\lie{g}.$ 
By the Jacobson-Morozov theorem, we can extend $X_\lambda$ to a $\lie{sl}(2)$-triple which we will denote $\{ H_\lambda, X_\lambda, Y_\lambda\}\subseteq \lie{g}.$ We will drop the $\lambda$ subscript if there is no room for confusion. As $H_\lambda$ is semisimple and the weights of $\lie{sl}(2)$ are integral, we have that under $\ad H_\lambda$ our Lie algebra decomposes \[   \lie{g}=\bigoplus_{i\in\Z} \lie{g}_i\]
where $\lie{g}_i=\{ A\in \lie{g}: [H_\lambda,A]=iA\}$ and $[\lie{g}_i,\lie{g}_j]=\lie{g}_{i+j}.$ Notice at once that $\dim \lie{g}_i=\dim \lie{g}_{-i}$ as finite dimensional $\lie{sl}(2)$ representations have symmetric multiplicities for positive and negative weights.  

Consider the subalgebra generated by only non-negative weights of $H_\lambda.$ Namely, consider $\lie{q}=\bigoplus_{i\geq 0} \lie{g}_i.$ 

\begin{lemma} 
	$\lie{q}$ is a parabolic subalgebra of $\lie{g}.$
\end{lemma}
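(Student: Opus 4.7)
The plan is to exhibit a Borel subalgebra contained in $\lie{q}$, which by definition makes $\lie{q}$ parabolic. The $\mathfrak{sl}(2)$-grading already gives $\lie{q}$ the shape of a parabolic (reductive-looking degree-zero piece plus a degree-positive piece that is a nilpotent ideal), so the main task is to refine the grading by a compatible root space decomposition.

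First I would observe that $H=H_\lambda$ is semisimple, so $\lie{g}_0$ is the centralizer $\lie{z}_\lie{g}(H)$, which is a reductive (Levi) subalgebra of $\lie{g}$. Choose a Cartan subalgebra $\lie{h}\subseteq \lie{g}_0$ containing $H$; because $\lie{g}_0$ is the centralizer of the semisimple element $H$, any such $\lie{h}$ is also a Cartan subalgebra of $\lie{g}$. Let $\Phi\subseteq \lie{h}^*$ be the resulting root system.

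Next, since $H\in \lie{h}$, each root space $\lie{g}_\alpha$ lies in a single $\ad H$-eigenspace, namely $\lie{g}_{\alpha(H)}$. Because the eigenvalues of $\ad H$ are integral (the weights of an $\mathfrak{sl}(2)$-representation are integers), we get a refinement
\[
\lie{g}_i = \bigoplus_{\substack{\alpha\in \Phi \\ \alpha(H)=i}} \lie{g}_\alpha \quad (i\neq 0), \qquad \lie{g}_0 = \lie{h}\oplus \bigoplus_{\substack{\alpha\in \Phi \\ \alpha(H)=0}} \lie{g}_\alpha.
\]
The set $\Phi_0=\{\alpha\in\Phi : \alpha(H)=0\}$ is the root system of the reductive algebra $\lie{g}_0$.

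Finally I would build a Borel. Pick any system of positive roots $\Phi_0^+$ for $\Phi_0$, and declare the full positive system of $\Phi$ to be $\Phi^+ = \Phi_0^+ \cup \{\alpha\in \Phi : \alpha(H)>0\}$. One checks this is closed under addition (sums of roots with nonnegative $H$-value again have nonnegative $H$-value, and the degree-zero part was chosen to be a positive system for $\Phi_0$) and that $\Phi=\Phi^+\sqcup(-\Phi^+)$. The associated Borel subalgebra $\lie{b}=\lie{h}\oplus\bigoplus_{\alpha\in\Phi^+}\lie{g}_\alpha$ is then contained in $\lie{q}$, so $\lie{q}$ is parabolic.

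I do not expect a real obstacle: the only point requiring a bit of care is verifying that $\Phi^+$ as defined really is a positive system (closure under addition and the $\Phi=\Phi^+\sqcup(-\Phi^+)$ decomposition), and that $\lie{h}\subseteq\lie{g}_0$ is a Cartan subalgebra of all of $\lie{g}$. Both are standard consequences of $H$ being semisimple with integral $\ad$-eigenvalues, and together they package the $\mathfrak{sl}(2)$-grading into the statement that $\lie{q}=\lie{l}\oplus\lie{u}$ is the standard Levi decomposition of a parabolic attached to the one-parameter subgroup generated by $H$.
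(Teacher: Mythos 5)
Your proof is correct and takes essentially the same approach as the paper: exhibit a Borel subalgebra inside $\lie{q}$ by putting $H$ in a Cartan $\lie{h}$ and choosing a positive system so that positive roots pair nonnegatively with $H$. You spell out more carefully a point the paper leaves implicit, namely how the positive system is actually constructed (refine the $\ad H$-grading via a positive system for $\Phi_0$ on the degree-zero part and take all roots of positive $H$-value), whereas the paper simply asserts "by the choice of subalgebra, $\alpha(H_\lambda)\geq 0$ for all $\alpha\in\Delta^+$."
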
 
\begin{proof} 
	Every $H_\lambda $ sits inside a Cartan subalgebra $\lie{h}.$ By the choice of subalgebra, we have that $\alpha(H_\lambda)\geq 0 \in \Z$ for all $\alpha\in \Delta^+(\lie{g},\lie{h}).$ Whence, $\lie{q}$ contains a Borel subalgebra (the span of these positive roots and the Cartan subalgebra). 
\end{proof} 

We have a decomposition of $\lie{q}=\lie{g}_0\ds \bigoplus_{i>0}\lie{g}_i=:\lie{l}\ds \lie{u}$ where $\lie{l}$ is reductive and $\lie{u}$ is nilpotent. Denote by $\close{\lie{u}}=\bigoplus_{i<0} \lie{g}_i$ the subalgebra of negative weights.   We set $Q$ to be the connected subgroup of $G$ with Lie algebra $\lie{q}.$ 

\begin{lemma}[Kostant {\cite[Section 4]{Kostant1963}} ] \label{Dense_Q_orbit_g_2} 
	Set $\lie{g}^2=\bigoplus_{i\geq 2} \lie{g}_i\subseteq \lie{q}.$ Then $X_\lambda\in \lie{g}^2$ and $Q\cdot X_\lambda\subseteq \lie{g}^2.$ Further, $Q\cdot X$ is open and dense in $\lie{g}^2.$     
\end{lemma}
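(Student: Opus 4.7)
The plan is to verify the three claims in order, with the last being the main content.

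For the first two assertions, I would observe that $[H_\lambda,X_\lambda]=2X_\lambda$ (a defining relation of the $\lie{sl}(2)$-triple), so $X_\lambda\in\lie{g}_2\subseteq\lie{g}^2$. The inclusion $Q\cdot X_\lambda\subseteq\lie{g}^2$ then follows from the fact that $\lie{g}^2$ is an ideal of $\lie{q}$: under the grading, $[\lie{g}_i,\lie{g}_j]\subseteq\lie{g}_{i+j}$, so for $i\geq 0$ and $j\geq 2$ we get $i+j\geq 2$, hence $\ad(\lie{q})\lie{g}^2\subseteq\lie{g}^2$. Integrating this Lie-algebra statement (using that $Q$ is connected) gives $\Ad(Q)\lie{g}^2\subseteq\lie{g}^2$, and applying this to $X_\lambda\in\lie{g}^2$ yields the claim.

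For openness and density of $Q\cdot X_\lambda$ in $\lie{g}^2$, I would compute the tangent space to the orbit at $X_\lambda$. Since $\lie{g}^2$ is irreducible (it is a vector space), it suffices to show this tangent space is all of $\lie{g}^2$, i.e.\ that
\[ [\lie{q},X_\lambda]=\ad(X_\lambda)(\lie{q})=\lie{g}^2. \]
By the grading, this reduces to showing that for each $i\geq 0$ the map $\ad(X_\lambda)\colon \lie{g}_i\to\lie{g}_{i+2}$ is surjective. Here I would invoke $\lie{sl}(2)$-representation theory applied to $\lie{g}$ as a module over $\Span{H_\lambda,X_\lambda,Y_\lambda}$: in any finite-dimensional irreducible $\lie{sl}(2)$-summand of highest weight $n$, the raising operator $\ad(X_\lambda)$ maps the weight-$i$ space isomorphically onto the weight-$(i+2)$ space whenever $i\geq 0$ (both spaces are one-dimensional, and the scalar $k(n-k+1)$ obtained from the standard formula is nonzero when $i=n-2k\geq 0$, i.e.\ when $k\leq n/2$); when $i=n$ or $i>n$ the target $\lie{g}_{i+2}$ intersects the summand trivially and surjectivity is automatic. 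Summing over irreducible summands gives the desired surjectivity in each positive degree.

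Combining these, the orbit $Q\cdot X_\lambda$ is an open (hence dense) subset of the irreducible variety $\lie{g}^2$. The main (and only nontrivial) obstacle is the surjectivity of $\ad(X_\lambda)\colon\lie{g}_i\to\lie{g}_{i+2}$ for $i\geq 0$; the rest is bookkeeping with the grading. Conceptually, the non-negativity condition on $i$ is essential: it ensures we are in the "upper half" of each irreducible $\lie{sl}(2)$-summand, where the raising operator has no kernel on nonzero target weight spaces.
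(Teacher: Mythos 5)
Your argument is correct. The paper does not give its own proof here; it simply refers to Collingwood--McGovern, Lemma 4.1.4, which reaches the same conclusion by a dimension count: since $\lie{g}^{X_\lambda}\subseteq\lie{q}$ one has $\dim Q\cdot X_\lambda=\dim\lie{q}-\dim\lie{g}^{X_\lambda}$, and the $\lie{sl}(2)$-fact that $\dim\lie{g}^{X_\lambda}=\dim\lie{g}_0+\dim\lie{g}_1$ (each irreducible summand contributes its highest weight vector, which has weight $0$ or $1$ after folding by the symmetry $\dim\lie{g}_i=\dim\lie{g}_{-i}$) gives $\dim Q\cdot X_\lambda=\sum_{i\geq 2}\dim\lie{g}_i=\dim\lie{g}^2$. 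Your route instead verifies directly that $[\lie{q},X_\lambda]=\lie{g}^2$ by showing $\ad(X_\lambda)\colon\lie{g}_i\to\lie{g}_{i+2}$ is surjective for each $i\geq 0$; both arguments rest on the same $\lie{sl}(2)$-representation theory, and your version has the mild advantage of exhibiting the tangent space to the orbit explicitly rather than matching dimensions. One small slip in phrasing: the scalar $k(n-k+1)$ does vanish at $k=0$ (which is the case $i=n$), so it is not literally true that it is nonzero for all $k\leq n/2$; but you correctly dispose of $i\geq n$ separately by noting the target weight space is then zero, so the surjectivity conclusion and the lemma are unaffected.
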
  
\begin{proof} 
	See \cite[Lemma 4.1.4]{Collingwood1993} for a proof. 
\end{proof} 

Now, the cotangent bundle of $G/Q$ is a $G$-equivariant vector bundle. The structure theory of $G$-equivariant vector bundles on a homogeneous space is an easy exercise. This implies that we can write \[ T^*(G/Q)=G\times_Q (T^*_{eQ} G/Q)=G\times_Q (\lie{g}/\lie{q})^*=G\times_Q (\bar{\lie{u}})^*\]
Again using the invariant form $\psi,$ we obtain an isomorphism: \[ G\times_Q (\bar{\lie{u}})^*\cong  G\times_Q \lie{u} \]
Consider the $G$-equivaraint subbundle of $G\times_Q \lie{u} $ defined by \[ \mathcal{R}:=G\times_Q \lie{g}^2. \]
We have the following lemma: 

\begin{lemma} 
	The adjoint map $\mu:\mathcal{R}\to \mathcal{N}$ given by $\mu([g,X])=\Ad(g)X$ is a proper birational map onto $\overline{G\cdot X_\lambda}.$  
\end{lemma}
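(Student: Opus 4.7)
The plan is to verify, in order: (i) $\mu$ is well-defined, (ii) $\mu$ is proper, (iii) $\mu(\mathcal{R})=\overline{G\cdot X_\lambda}$, and (iv) $\mu$ is birational onto its image. Well-definedness is immediate from $[\lie{q},\lie{g}^2]\subseteq \lie{g}^2$, which makes $\lie{g}^2$ a $Q$-stable closed subspace of $\lie{u}$, so that $\mathcal{R}=G\times_Q \lie{g}^2$ is a closed $G$-equivariant subbundle of $G\times_Q \lie{u}$.

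For properness, I would use the standard $G$-equivariant isomorphism $G\times_Q \lie{g}\cong G/Q\times \lie{g}$ given by $[g,X]\mapsto (gQ,\Ad(g)X)$. Under this identification, $\mu$ factors as a closed immersion $\mathcal{R}\hookrightarrow G/Q\times \lie{g}$ followed by projection to $\lie{g}$, which is proper because $G/Q$ is projective. Properness in particular makes $\mu(\mathcal{R})$ closed. Since $\mu([e,X_\lambda])=X_\lambda$ and the image is $G$-stable, it contains $\overline{G\cdot X_\lambda}$; conversely, Lemma \ref{Dense_Q_orbit_g_2} gives $\lie{g}^2=\overline{Q\cdot X_\lambda}\subseteq \overline{G\cdot X_\lambda}$, so $\mu(\mathcal{R})=G\cdot \lie{g}^2\subseteq \overline{G\cdot X_\lambda}$.

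For birationality, the natural open dense subset of $\mathcal{R}$ to isolate is $U=G\times_Q (Q\cdot X_\lambda)$, whose image under $\mu$ is the open dense orbit $G\cdot X_\lambda\subseteq \overline{G\cdot X_\lambda}$. Every point of $U$ has a representative of the form $[g,X_\lambda]$: given $[g,\Ad(q)X_\lambda]$, rewrite it as $[gq,X_\lambda]$. The resulting map $G\to U$, $g\mapsto [g,X_\lambda]$, descends to an isomorphism $G/Z_Q(X_\lambda)\xrightarrow{\sim} U$, under which $\mu|_U$ becomes the natural map $G/Z_Q(X_\lambda)\to G/Z_G(X_\lambda)\cong G\cdot X_\lambda$. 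This is an isomorphism exactly when $Z_Q(X_\lambda)=Z_G(X_\lambda)$, i.e., when $Z_G(X_\lambda)\subseteq Q$.

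The main obstacle is justifying this last containment $Z_G(X_\lambda)\subseteq Q$. At the Lie algebra level, $\lie{z}_\lie{g}(X_\lambda)\subseteq \lie{q}$ follows from $\lie{sl}(2)$-representation theory: decomposing $\lie{g}$ into irreducible summands under the triple $\{H_\lambda,X_\lambda,Y_\lambda\}$, the kernel of $\ad X_\lambda$ is spanned by highest-weight vectors, each of which has non-negative $H_\lambda$-eigenvalue and hence lies in $\lie{q}$. Lifting to the group level uses the standard Jacobson--Morozov decomposition $Z_G(X_\lambda)=Z_G(\{H_\lambda,X_\lambda,Y_\lambda\})\ltimes R_u(Z_G(X_\lambda))$, with $Z_G(\{H_\lambda,X_\lambda,Y_\lambda\})\subseteq Z_G(H_\lambda)=L$ and the unipotent radical contained in $U$; see \cite{Collingwood1993}. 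Combined with the previous steps, this yields the asserted proper birational map onto $\overline{G\cdot X_\lambda}$.
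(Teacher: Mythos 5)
Your proposal is correct and follows essentially the same approach as the paper: factor $\mu$ through the closed embedding $\mathcal{R}\hookrightarrow G/Q\times\lie{g}$ (the paper phrases this as the injective morphism $m$ into $G/Q\times\mathcal{N}$), use projectivity of $G/Q$ for properness, and use the containment $G^{X_\lambda}\subseteq Q$ to show the fibre over $X_\lambda$ is a single point. The one substantive difference is that the paper asserts $G^{X_\lambda}\subseteq Q$ without proof, whereas you supply the $\lie{sl}(2)$ highest-weight argument at the Lie algebra level together with the Levi decomposition of the stabilizer to lift to the group; this fills in a step the paper leaves to the reader but does not change the structure of the argument.
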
 
\begin{proof} 
	Define a morphism $m:\mathcal{R}\to G/Q\times \mathcal{N}$ by $(g,X)\mapsto (gQ, \Ad(g)(X))$. We claim this map is injective. To see this, suppose $(hQ,\Ad(h)(Y))=(gQ,\Ad(g)(X))$ for some $(h,Y), (g,X)\in \mathcal{R}.$ Then by definition $hQ=gQ\iff \exists q\in Q$ such that $g=hq$ or equivalently that $h^{-1}g=q\in Q.$ Additionally, $\Ad(h)^{-1}\Ad(g)X=Y.$ Now, \[ (g,X)\sim (hq,X)\sim (h,\Ad(q)X)\sim(h,\Ad(h)^{-1}\Ad(g)X)=(h,Y) \]
	Hence, $m$ is injective. Further, its image is closed as on fibres we have an injective linear map of finite dimensional spaces. Thus $m$ is proper.  We then see that the adjoint map $\mu$ is simply the restriction of the projection map $\pi_2:G/Q\times \mathcal{N}\to \mathcal{N}.$ As $G/Q$ is projective (hence proper over $\C$), $\pi_2$ is projective. By \cite[Theorem 4.9]{Hartshorne1977}, projective morphisms of noetherian schemes are proper. Hence, $\mu$ is proper. 
	
	To show it is birational, notice that $G\cdot X_\lambda$ is open in the image (as it is the orbit of an affine algebraic group). Now, the fibre of $\mu$ over $X_\lambda$ is $G^{X_\lambda}/(G^{X_\lambda}\cap Q)=\{*\}$ as $G^X$ is contained in $Q.$ Whence, on $G\cdot X_\lambda$ we can define a rational inverse, namely $X \mapsto (g,X_\lambda)$ for any $g$ such that $\Ad(g)X_\lambda=X.$ This is well defined up to right multiplication by the stabilizer $G^{X_{\lambda}}$ which is contained in $Q.$   Finally, as $\mu$ is proper the image is closed and thus \[\im(\mu)\supseteq \overline{\mathcal{O}_\lambda}\] The reverse inclusion follows from the definition of $\mu.$ Therefore, the image is $\overline{G\cdot X_\lambda}.$ This completes the proof.   
\end{proof} 

\begin{corollary} 
	We have an isomorphism in $G$-equivariant $\mathbb{K}$-theory \[ \mathbb{K}^G(\mathcal{R})\cong \mathbb{K}(Q)\]
	the latter being the representation ring of $Q.$ 
\end{corollary}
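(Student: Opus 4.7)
The plan is to reduce the claim to two standard facts about equivariant $K$-theory and chain them together. First, I would identify $\mathbb{K}^G(G/Q)$ with the representation ring $\mathbb{K}(Q) = R(Q)$. Second, I would use homotopy invariance (or the Thom isomorphism) to promote this identification from the base $G/Q$ to the total space of the $G$-equivariant vector bundle $\mathcal{R} = G\times_Q \lie{g}^2 \to G/Q$.

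For the first step, recall the standard equivalence of abelian categories between $G$-equivariant coherent sheaves on $G/Q$ and finite-dimensional algebraic representations of $Q$: a $G$-equivariant sheaf $\mathcal{F}$ on $G/Q$ is sent to its fiber $\mathcal{F}_{eQ}$ at the identity coset, which inherits a $Q$-action from the stabilizer, and the inverse assigns to a $Q$-representation $\xi$ the induced vector bundle $G \times_Q \xi$. Passing to Grothendieck groups yields $\mathbb{K}^G(G/Q) \cong R(Q) = \mathbb{K}(Q)$.

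For the second step, the projection $\pi : \mathcal{R} \to G/Q$ is a $G$-equivariant affine bundle (in fact a vector bundle), and pullback along such a morphism induces an isomorphism
\[
\pi^* : \mathbb{K}^G(G/Q) \xrightarrow{\sim} \mathbb{K}^G(\mathcal{R}),
\]
which is the equivariant version of the homotopy invariance of $K$-theory (equivalently, a special case of the Thom isomorphism for the zero section $G/Q \hookrightarrow \mathcal{R}$, after trivializing the Koszul resolution). Composing with the identification above gives the desired isomorphism $\mathbb{K}^G(\mathcal{R}) \cong \mathbb{K}(Q)$.

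The main subtlety is not a deep obstacle but rather bookkeeping: one should specify whether $\mathbb{K}^G$ denotes the Grothendieck group of $G$-equivariant coherent sheaves or of $G$-equivariant algebraic vector bundles, and verify that $\pi^*$ is well-defined and an isomorphism in the chosen setting. Since $\mathcal{R}$ is smooth, the two candidate definitions coincide, and homotopy invariance applies cleanly. Everything else is formal once these two references are in hand.
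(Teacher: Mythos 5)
Your argument is correct and relies on exactly the same two ingredients as the paper's proof — Thomason's induction equivalence $\mathbb{K}^G(G\times_Q X)\cong\mathbb{K}^Q(X)$ and equivariant homotopy invariance for affine bundles — just applied in the opposite order (you contract $\mathcal{R}\to G/Q$ first and then descend from $G/Q$ to $R(Q)$, whereas the paper first descends $\mathbb{K}^G(\mathcal{R})\cong\mathbb{K}^Q(\lie{g}^2)$ and then contracts $\lie{g}^2$ to a point). Your remark that $\mathcal{R}$ is smooth so $\mathbb{K}$-theory of coherent sheaves agrees with $\mathbb{K}$-theory of vector bundles is the right way to dispatch the one bookkeeping issue.
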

\begin{proof} 
	By Thomason's theorem \cite[Proposition 6.2]{Thomason1987} we have that \[\mathbb{K}^G(\mathcal{R})\cong \mathbb{K}^Q(\lie{g}^2)\]
	Now by \cite[Corollary 4.2]{Thomason1987} we have that if $G$ acts on affine $n$-space linearly, $\mathbb{K}^G(X)\cong \mathbb{K}^G(X\times \mathbb{A}_\C^n)$ for all $n$ and all $X.$ Setting $X=\{*\},$ we obtain that \[ \mathbb{K}^Q(\lie{g}^2)\cong \mathbb{K}^Q(\{*\})=\mathbb{K}(Q).\]
	This completes the proof.  
\end{proof}

\section{Nilpotent Orbits in the Real Case} 

As above, let $G$ be a connected complex reductive algebraic group. Further, let $\sigma_\R:G\to G$ and $\sigma_c:G\to G$ be involutive automorphisms defining real and compact forms respectively. Denote the real form as $G_\R:=G^{\sigma_\R}=G(\R).$ Using these, we define a Cartan involution \[ \theta=\sigma_\R\sigma_c:G\to G\]
We abuse notation and denote by $\theta$ the corresponding involution on the Lie algebra $\lie{g}.$ By further abuse of notation, we will also write $\theta$ when discussing the restriction to $\lie{g}_\R:=\lie{g}(\R)$ the Lie algebra of $G_\R.$ Set $K_\theta:=G^\theta.$ Then $K_\theta=(K_\R)_\C$ for $K_\R=G_\R^\theta.$ Again denote by $K$ the identity component of $K_\theta.$    

For the orbit method for real groups, we need to consider $\mathcal{N}^*_\theta=\mathcal{N}^*\cap (\lie{g}/\lie{k})^*$ instead of $\mathcal{N}^*.$ By fixing a non-degenerate invariant symmetric bilinear form $\psi$ on $\lie{g}$ we identify \[ \mathcal{N}_\theta \simeq \mathcal{N}_\theta^*\]
and remark that \[ \mathcal{N}_\theta=\{ \xi\in \mathcal{N}: \theta\xi=-\xi \}=\mathcal{N}\cap \lie{p} \]

By the results in \cite[Introduction p. 755]{KostantRallis1971}, we have that for any $X_\lambda\in \mathcal{N}_\theta$ we can complete this to an $\lie{sl}(2,\C)$ triple with $Y_\lambda\in \mathcal{N}_\theta$ and $H_\lambda\in \lie{k}.$ We will denote the $K$-orbit in $\mathcal{N}_\theta$ coming from the $G$-orbits $\mathcal{O}_\lambda$ as $\mathcal{O}^\lie{k}_\lambda.$  

In the previous section, we constructed a parabolic subgroup $Q=Q_\lambda$ of $G$ associated to $\{X_\lambda, Y_\lambda, H_\lambda\}.$ In our current setting, we need to somehow construct a parabolic subgroup in $K$. Following \cite[Sections 10-11]{AdamsVogan2021}, it turns out we can say something more about the parabolic constructed from our $\lie{sl}(2)$-triple: 

\begin{lemma}\label{Theta_stable_parabolic_real} 
	Let $X_\lambda\in \mathcal{N}_\theta.$ The associated parabolic subgroup $Q_\lambda$ is $\theta$-stable. In fact, $\theta Q_\lambda=Q_\lambda.$   
\end{lemma}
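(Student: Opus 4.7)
The plan is to reduce the claim to a statement about the Lie algebra $\lie{q}_\lambda$ and exploit the fact that $H_\lambda$ was chosen in $\lie{k}$, so that $\theta$ commutes with $\ad H_\lambda$ and preserves its eigenspace decomposition.

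First I would recall that $\lie{q}_\lambda = \bigoplus_{i \geq 0} \lie{g}_i$, where $\lie{g}_i$ is the $i$-eigenspace of $\ad H_\lambda$, and that $Q_\lambda$ is by definition the connected subgroup of $G$ with Lie algebra $\lie{q}_\lambda$. The key input is the result of Kostant--Rallis cited in the previous paragraph, which guarantees that the triple $\{H_\lambda, X_\lambda, Y_\lambda\}$ can be chosen with $H_\lambda \in \lie{k}$. Consequently $\theta H_\lambda = H_\lambda$, so for any $A \in \lie{g}_i$ one computes
\[
[H_\lambda, \theta A] = [\theta H_\lambda, \theta A] = \theta [H_\lambda, A] = i\,\theta A,
\]
which shows that $\theta$ preserves each eigenspace, $\theta \lie{g}_i = \lie{g}_i$. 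Summing over $i \geq 0$ gives $\theta \lie{q}_\lambda = \lie{q}_\lambda$.

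Next I would pass back to groups. Since $\theta$ is an algebraic automorphism of $G$, its image $\theta Q_\lambda$ is a connected subgroup of $G$ whose Lie algebra is $\theta \lie{q}_\lambda = \lie{q}_\lambda$. By the standard bijection between connected subgroups of $G$ and subalgebras of $\lie{g}$ of algebraic type (or by the fact that $Q_\lambda$ is parabolic, hence equal to its own normalizer, so uniquely determined by its Lie algebra), we conclude $\theta Q_\lambda = Q_\lambda$. The $\theta$-stability then immediately implies the weaker assertion in the first sentence.

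There is essentially no obstacle here: the only subtlety is being careful that $\theta Q_\lambda$ really equals $Q_\lambda$ and not just a parabolic with the same Lie algebra, but for parabolic subgroups these coincide. The whole argument hinges on the choice $H_\lambda \in \lie{k}$, which is exactly what is provided by \cite{KostantRallis1971}.
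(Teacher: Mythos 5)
Your proof is correct and follows essentially the same approach as the paper: both reduce to the Lie algebra by using $H_\lambda \in \lie{k}$, hence $\theta H_\lambda = H_\lambda$, and the identical computation $[H_\lambda, \theta A] = i\,\theta A$ to show $\theta$ preserves each $\ad H_\lambda$-eigenspace. The only difference is at the group level: you invoke the fact that a connected algebraic subgroup (or a parabolic, via self-normalization) is determined by its Lie algebra, whereas the paper considers $Q_\lambda \cap \theta Q_\lambda$, observes it is a closed subgroup of $Q_\lambda$ with full Lie algebra and hence open, and concludes by connectedness; these are interchangeable ways of finishing the same argument.
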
  
\begin{proof} 
	We first show this on the Lie algebra level. Recall that $\lie{q}_\lambda=\bigoplus_{i\geq 0} \lie{g}_i$ where the $\lie{g}_i$ are the weight spaces of the semisimple operator $\ad(H_\lambda).$ Notice that as $H_\lambda\in \lie{k}$ we have that $\theta(H_\lambda)=H_\lambda.$ Now, for any $A\in \lie{g}_i$ we have that \[ [H_\lambda, \theta(A)]=[\theta(H_\lambda),\theta(A)]=\theta([H_\lambda,A])=i\theta(A)\]
	Hence, $\theta(A)\in \lie{g}_i$ and $\theta|_{\lie{g}_i}:\lie{g}_i\to \lie{g}_i$ is a vector space isomorphism. Whence, \[\theta\lie{q}_\lambda=\theta\left(\bigoplus_{i\geq 0} \lie{g}_i \right)=\bigoplus_{i\geq 0} \theta( \lie{g}_i )=\bigoplus_{i\geq 0} \lie{g}_i=\lie{q}_\lambda\]
	
	On the group level, consider $Q_\lambda\cap \theta Q_\lambda.$ This is a closed algebraic subgroup of $Q_\lambda$ and its Lie algebra is $\lie{q}_\lambda\cap \theta \lie{q}_\lambda=\lie{q}_\lambda.$ Therefore, $Q_\lambda\cap \theta Q_\lambda$ is a closed algebraic subgroup of $Q_\lambda$ with the same Lie algebra. Thus, it is also an open subgroup of $Q_\lambda.$ Hence, \[ Q_\lambda=\theta Q_\lambda\]
\end{proof} 

\begin{lemma} 
	Let $L_\lambda$ be the connected subgroup of $Q_\lambda$ with Lie algebra $\lie{l}_\lambda:=\lie{g}_0$ (the Levi factor). Then $L_\lambda$ is $\theta$-stable and $L_\lambda=\theta L_\lambda.$ 
\end{lemma}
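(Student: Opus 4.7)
The plan is to follow essentially the same two-step argument as in Lemma \ref{Theta_stable_parabolic_real}, first establishing $\theta$-stability on the Lie algebra and then lifting to the group. The Lie algebra step is almost immediate from what was proved in the previous lemma: in that proof it was shown that $\theta$ preserves each $\ad(H_\lambda)$-weight space $\lie{g}_i$, because $H_\lambda \in \lie{k}$ forces $\theta(H_\lambda)=H_\lambda$ and then $[H_\lambda,\theta(A)] = \theta([H_\lambda,A]) = i\,\theta(A)$ for $A\in \lie{g}_i$. Taking $i=0$ gives $\theta \lie{l}_\lambda = \theta \lie{g}_0 = \lie{g}_0 = \lie{l}_\lambda$, which is the Lie algebra statement we need.

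Next I would pass to the group. Since $\theta:G\to G$ is an algebraic group automorphism, $\theta L_\lambda$ is a closed connected algebraic subgroup of $G$, and its Lie algebra is $\theta\lie{l}_\lambda = \lie{l}_\lambda$. Now one can argue exactly as in Lemma \ref{Theta_stable_parabolic_real}: the intersection $L_\lambda \cap \theta L_\lambda$ is a closed algebraic subgroup of $L_\lambda$ whose Lie algebra equals $\lie{l}_\lambda \cap \theta\lie{l}_\lambda = \lie{l}_\lambda$, so it is also open in $L_\lambda$; being nonempty, closed, and open in the connected group $L_\lambda$, it must equal $L_\lambda$. This gives $L_\lambda \subseteq \theta L_\lambda$, and applying $\theta$ (an involution) yields the reverse inclusion, hence $L_\lambda = \theta L_\lambda$.

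There is no real obstacle here; the argument is a direct rerun of the previous lemma with $\lie{l}_\lambda$ in place of $\lie{q}_\lambda$. The only point worth noting is that one really does use the connectedness of $L_\lambda$ in the definition, so that the subgroup is uniquely determined by its Lie algebra — otherwise one could only conclude $\theta$-stability of the identity component rather than of $L_\lambda$ itself. Once this is in hand, $\theta$-stability of $L_\lambda$ combined with the earlier $\theta$-stability of $Q_\lambda$ yields $\theta$-stability of the unipotent radical as well (via $\lie{u}_\lambda = \bigoplus_{i>0}\lie{g}_i$), which is presumably what the subsequent development of the paper will use.
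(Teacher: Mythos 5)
Your proposal is correct and reproduces essentially the same argument as the paper, which simply says the Lie algebra statement follows from the proof of the previous lemma and that the group-level equality $L_\lambda = \theta L_\lambda$ follows by repeating the open-closed-connected argument with $L_\lambda$ in place of $Q_\lambda$. You have merely spelled out what the paper leaves implicit.
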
 
\begin{proof} 
	The Lie algebra $\lie{l}$ is $\theta$-stable by the proof given in the previous lemma. Similarly, $L_\lambda =\theta L_\lambda$ by the argument above simply by replacing $Q_\lambda$ in the proof. 
\end{proof} 

This gives a Levi decomposition $Q_\lambda=L_\lambda U_\lambda$ by $\theta$-stable subgroups. One remarkable property of $\theta$-stable parabolic subgroups is the following proposition: 

\begin{proposition}\label{Theta_Stable_Parabolics_Descend}\cite[Lemma 4.1]{Hecht1987}
	Let $P\subseteq G$ be a $\theta$-stable parabolic subgroup. Then $P\cap K$ is parabolic in $K.$ 
\end{proposition}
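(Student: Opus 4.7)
The plan is to show directly that $K/(P\cap K)$ is a projective variety, which by the usual characterization is equivalent to $P\cap K$ being a parabolic subgroup of $K$. The inclusion $K \hookrightarrow G$ descends to an injective $K$-equivariant morphism $K/(P\cap K) \to G/P$ whose image is the $K$-orbit $K \cdot eP$. So it is enough to prove this orbit is closed inside the projective flag variety $G/P$.

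Since $P$ is $\theta$-stable, $\theta$ descends to an algebraic involution of $G/P$. I would let $Y := (G/P)^\theta$ denote its fixed locus; in characteristic zero the fixed set of a finite-order regular automorphism on a smooth variety is smooth and closed, and its tangent space at each point is the $d\theta$-fixed subspace of the ambient tangent space. Because $K$ is pointwise $\theta$-fixed we have $K \cdot eP \subseteq Y$. More generally, at any $gP \in Y$ the condition $\theta(g)P = gP$ gives $g^{-1}\theta(g) \in P$, so $gPg^{-1}$ is $\theta$-stable and its Lie algebra $\Ad(g)\lie{q}$ (where $\lie{q} = \mathrm{Lie}(P)$) inherits $\theta$-stability.

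The core computation is to compare $T_y Y$ and $T_y(K \cdot y)$ at each $y = gP \in Y$. Using the Cartan decomposition $\lie{g} = \lie{k} \oplus \lie{p}$ together with $\theta$-stability of $\Ad(g)\lie{q}$, the subspace decomposes as $\Ad(g)\lie{q} = (\Ad(g)\lie{q} \cap \lie{k}) \oplus (\Ad(g)\lie{q} \cap \lie{p})$, so the $d\theta$-fixed part of the quotient is
\[
T_y Y \;=\; \bigl(\lie{g}/\Ad(g)\lie{q}\bigr)^{d\theta} \;=\; \lie{k}\,/\,\bigl(\lie{k} \cap \Ad(g)\lie{q}\bigr).
\]
On the other hand $T_y(K \cdot y)$ is the image of $\lie{k} \to \lie{g}/\Ad(g)\lie{q}$, which is the same subspace. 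Hence every $K$-orbit on $Y$ is open at each of its points, so open in $Y$. Since $K$-orbits partition $Y$ and each is open, each is also closed (its complement is a union of other open orbits). In particular $K \cdot eP$ is closed in $Y$, hence closed in $G/P$. As a closed subvariety of a projective variety, $K \cdot eP \cong K/(P \cap K)$ is projective, so $P \cap K$ is parabolic in $K$.

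The main obstacle is the tangent-space identification, which must be done at every point of $Y$ rather than only at $eP$: this forces the preliminary check that every $\theta$-fixed coset has $\theta$-stable $G$-stabilizer, after which the $\theta$-stable Cartan decomposition applies. A secondary input to keep honest is the smoothness of the $\theta$-fixed locus and the identification of its tangent bundle with the $+1$-eigenspace of $d\theta$; both are standard for finite-order automorphisms of smooth varieties in characteristic zero, but they are what upgrade the tangent-space matching into openness and ultimately closedness of the orbit.
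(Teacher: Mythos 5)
The paper does not actually prove this proposition: it is cited verbatim as Lemma~4.1 of Hecht--Mili\v{c}i\'c--Schmid--Wolf (1987), so there is no internal argument to compare your proof against. That said, your argument is correct and is essentially the standard route to this fact.

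Two small points worth keeping in mind when you write this up carefully. First, the inclusion $K\cdot eP \cong K/(P\cap K)$ as varieties uses that orbit maps are separable in characteristic zero; this is automatic here but is the reason a bijective morphism onto the orbit is an isomorphism. Second, the final step --- deducing that $P\cap K$ is parabolic from completeness of $K/(P\cap K)$ --- is the Borel fixed point theorem: completeness forces a Borel of $K$ to have a fixed point on $K/(P\cap K)$, so some $K$-conjugate of the Borel lies in $P\cap K$, and a closed subgroup containing a Borel is parabolic (and automatically connected, since parabolics are self-normalizing). Neither of these is a gap, just places where the logic leans on standard facts that are worth naming. As an aside, an alternative and equally common proof chooses a $\theta$-stable Borel $B\subseteq P$ and a $\theta$-stable maximal torus $H\subseteq B$, and shows directly via the root-space decomposition that $\lie{b}\cap\lie{k}$ is a Borel subalgebra of $\lie{k}$; your tangent-space-matching argument avoids the root combinatorics at the cost of invoking smoothness of the $\theta$-fixed locus, and I think it is the cleaner of the two.
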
 

We then have the following immediate corollary for $Q_\lambda.$ 
\begin{corollary} 
	$K\cap Q_\lambda$ is parabolic in $K.$ 
\end{corollary}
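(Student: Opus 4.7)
The plan is to obtain this as an immediate consequence of the two preceding results. Lemma \ref{Theta_stable_parabolic_real} establishes that the parabolic $Q_\lambda$ constructed from the $\lie{sl}(2)$-triple $\{H_\lambda,X_\lambda,Y_\lambda\}$ with $H_\lambda \in \lie{k}$ satisfies $\theta Q_\lambda = Q_\lambda$, and it is parabolic in $G$ by the construction from Section 2. Thus $Q_\lambda$ satisfies the hypothesis of Proposition \ref{Theta_Stable_Parabolics_Descend}, and applying the proposition gives the conclusion. So the proof will literally be a one-sentence reduction: $Q_\lambda$ is $\theta$-stable and parabolic in $G$ by Lemma \ref{Theta_stable_parabolic_real}, hence $Q_\lambda \cap K$ is parabolic in $K$ by Proposition \ref{Theta_Stable_Parabolics_Descend}.

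If one wanted to unpack what is hidden inside the cited proposition (which is really where the content lies), the idea is that $\theta$-stability forces the nilradical to decompose as $\lie{u} = (\lie{u}\cap\lie{k}) \oplus (\lie{u}\cap\lie{p})$, and there is a $\theta$-stable Cartan $\lie{h} \subseteq \lie{l}_\lambda$ whose compact part $\lie{t} = \lie{h}^\theta$ is a Cartan of $\lie{k}$. A choice of positive system making $\lie{q}_\lambda$ standard restricts to a positive system on the $\lie{t}$-roots of $\lie{k}$, and with respect to this positive system $\lie{q}_\lambda \cap \lie{k}$ visibly contains a Borel of $\lie{k}$. Passing to groups, and observing that parabolicity is detected on Lie algebras (so working with the identity component $K$ of $K_\theta$ is harmless), one gets that $Q_\lambda \cap K$ is parabolic.

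The main obstacle is that there is no obstacle: the result is genuinely a corollary, and all the substantive work has already been done in verifying $\theta Q_\lambda = Q_\lambda$ and in the cited Hecht lemma. The only thing to double-check is the passage from the possibly disconnected $K_\theta$ to its identity component $K$, which causes no trouble because $Q_\lambda \cap K$ is closed in $K$ with the same Lie algebra as $Q_\lambda \cap K_\theta$, and parabolicity is a Lie-algebra condition.
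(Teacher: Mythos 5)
Your proof is correct and matches the paper's own argument exactly: Lemma \ref{Theta_stable_parabolic_real} gives $\theta$-stability of $Q_\lambda$, and Proposition \ref{Theta_Stable_Parabolics_Descend} then yields the result. The additional commentary on what is hidden in the cited proposition and on passing to the identity component is accurate but beyond what the paper records.
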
 
\begin{proof}
	$Q_\lambda$ is $\theta$-stable by Lemma \ref{Theta_stable_parabolic_real}. Thus, we can apply the previous proposition and we are done.  
\end{proof} 

We may now repeat the processes from the previous sub-sections replacing $G$ by $K$ and $Q$ by $K\cap Q_\lambda.$ For ease of notation, set once and for all $Q_K:=K\cap Q_\lambda, L_K:=K\cap L_\lambda, U_K=K\cap U_\lambda.$ Recall the Cartan decomposition of the Lie algebra induced by the Cartan involution: 
\[ \lie{g}=\lie{k}\ds \lie{p} \]  
As $[\lie{k},\lie{p}]\subseteq \lie{p},$ we have that $\ad H_\lambda$ induces a grading (as above) on $\lie{p}$ which we shall write as \[ \lie{p}=\bigoplus_{i\in \Z} \lie{p}_i \]
with $\lie{p}_i=\{ S\in \lie{p}: \ad_{H_\lambda} (S)=iS\}.$ With the same conventions as above, set $\lie{p}^2=\bigoplus_{i\geq 2} \lie{p}_i.$ Notice that $X_\lambda\in \lie{p}_2\subseteq \lie{p}^2.$   

In fact, we can say something stronger about this grading: 
\begin{lemma}\label{Graded_Cartan_decomposition} 
	The Cartan decomposition descends to a graded involution of $\lie{g}$ under $\ad_{H_\lambda}.$  
\end{lemma}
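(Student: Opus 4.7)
The plan is to mimic the argument used in Lemma \ref{Theta_stable_parabolic_real}. The key observation is that $H_\lambda$ was chosen so that $H_\lambda \in \lie{k}$, which means $\theta(H_\lambda)=H_\lambda$, so $\theta$ commutes with $\ad H_\lambda$ as linear operators on $\lie{g}$. From this, the $\ad H_\lambda$-eigenspaces are $\theta$-stable: for $A\in \lie{g}_i$,
\[ [H_\lambda,\theta(A)]=[\theta(H_\lambda),\theta(A)]=\theta([H_\lambda,A])=i\,\theta(A), \]
so $\theta(\lie{g}_i)\subseteq \lie{g}_i$.

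Next, since $\theta$ restricted to each $\lie{g}_i$ is an involution on a finite-dimensional complex vector space, I decompose $\lie{g}_i$ into its $\pm 1$ eigenspaces. Setting $\lie{k}_i:=\lie{g}_i\cap \lie{k}$ and $\lie{p}_i:=\lie{g}_i\cap \lie{p}$, this gives $\lie{g}_i=\lie{k}_i\ds \lie{p}_i$. Assembling over $i\in\Z$ then yields the graded Cartan decomposition
\[ \lie{g}=\bigoplus_{i\in\Z}\lie{g}_i=\bigoplus_{i\in\Z}(\lie{k}_i\ds \lie{p}_i), \qquad \lie{k}=\bigoplus_{i\in\Z}\lie{k}_i,\qquad \lie{p}=\bigoplus_{i\in\Z}\lie{p}_i, \]
which matches the notation $\lie{p}=\bigoplus \lie{p}_i$ already introduced just before the statement of the lemma, confirming consistency.

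There is no real obstacle here: the only input is that $H_\lambda$ lies in $\lie{k}$, and this was precisely arranged by the Kostant--Rallis choice of $\lie{sl}(2,\C)$-triple. The brief calculation above essentially reuses what was done for $\lie{q}_\lambda$ in Lemma \ref{Theta_stable_parabolic_real}, so I would simply cite that argument and then invoke the $\pm 1$-eigenspace decomposition of $\theta|_{\lie{g}_i}$. The lemma is better thought of as a bookkeeping statement: it records that $\theta$ is a morphism of graded Lie algebras, which is what allows the bracket identity $[\lie{k}_i,\lie{p}_j]\subseteq \lie{p}_{i+j}$ and, in particular, makes $\lie{p}^2=\bigoplus_{i\geq 2}\lie{p}_i$ into an $L\cap K$-stable subspace on which the later resolution-of-singularities construction for $\mathcal{N}_\theta'$ can be built.
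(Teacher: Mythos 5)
Your proof is correct and follows the same route as the paper: both exploit that $H_\lambda\in\lie{k}$ makes $\theta$ commute with $\ad H_\lambda$, so each $\lie{g}_i$ is $\theta$-stable, and then decompose each $\lie{g}_i$ into the $\pm 1$-eigenspaces of $\theta$, identifying them with $\lie{g}_i\cap\lie{k}$ and $\lie{g}_i\cap\lie{p}$. The paper spells out the eigenspace identification slightly more explicitly and records the full list of bracket relations at the end, but the substance is identical.
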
 		
\begin{proof} 
	As we observed above, $\theta(\lie{g}_i)=\lie{g}_i$ and $\theta^2=1.$ Therefore, for each $i,$ set $\lie{c}_i=\lie{g}_i^\theta$ and $\lie{d}_i=\lie{g}_i^{-\theta}$ the $+1$ and $-1$ eigenspaces respectively. We shall show that in fact, $\lie{c}_i=\lie{g}_i\cap \lie{k}$ and $\lie{d}_i=\lie{g}_i\cap \lie{p}=\lie{p}_i.$ Notice that as $H_\lambda\in \lie{k},$ we have that \begin{align*} \ad_{H_\lambda}(\lie{k})&\subseteq \lie{k}\\
	 \ad_{H_\lambda}(\lie{p})&\subseteq \lie{p}
	\end{align*} 
	Thus, $\lie{k}$ and $\lie{p}$ are $\ad_{H_\lambda}$-invariant subspaces of $\lie{g}$ and thus inherit gradings $\lie{k}_i$ and $\lie{p}_i$ via intersection. It is then immediately clear from the definitions that $\lie{c}_i=\lie{k}_i$ and $\lie{d}_i=\lie{p}_i.$ Hence, we have a graded Cartan decomposition \[\lie{g}=\lie{k}\ds \lie{p}=\bigoplus_{i\in \Z} \lie{k}_i\ds \lie{p}_i.\]
	This satisfies the following relations:  \begin{align*} 
		[\lie{k}_i,\lie{p}_j]&\subseteq \lie{p}_{i+j} \\
		[\lie{k}_i,\lie{k}_j]&\subseteq \lie{k}_{i+j}\\
		[\lie{p}_i,\lie{p}_j]&\subseteq \lie{k}_{i+j}
	\end{align*} 
\end{proof} 

We now have a similar result to Lemma \ref{Dense_Q_orbit_g_2}: 

\begin{lemma} 
	$Q_K\cdot X_\lambda\subseteq \lie{p}^2.$ Further, the orbit is open and dense. 
\end{lemma}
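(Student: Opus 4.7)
My plan is to imitate the proof of Lemma \ref{Dense_Q_orbit_g_2} but exploit the graded Cartan decomposition from Lemma \ref{Graded_Cartan_decomposition} to separate the $\lie{k}$ and $\lie{p}$ parts of the tangent space computation.

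First I would show the inclusion $Q_K\cdot X_\lambda\subseteq \lie{p}^2$. Since $Q_K=K\cap Q_\lambda\subseteq Q_\lambda$, Lemma \ref{Dense_Q_orbit_g_2} gives $Q_K\cdot X_\lambda\subseteq Q_\lambda\cdot X_\lambda\subseteq \lie{g}^2$. On the other hand $Q_K\subseteq K$ and the adjoint action of $K$ preserves $\lie{p}$ (since $[\lie{k},\lie{p}]\subseteq \lie{p}$ and $X_\lambda\in \lie{p}$), so $Q_K\cdot X_\lambda\subseteq \lie{p}$. Lemma \ref{Graded_Cartan_decomposition} identifies $\lie{g}^2\cap \lie{p}=\lie{p}^2$, giving the claimed containment.

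For openness, I would compute the tangent space to the orbit at $X_\lambda$, which is $[\lie{q}_K, X_\lambda]$ where $\lie{q}_K:=\lie{q}_\lambda\cap \lie{k}$. Kostant's proof of Lemma \ref{Dense_Q_orbit_g_2} shows $[\lie{q}_\lambda, X_\lambda]=\lie{g}^2$. Using the graded Cartan decomposition, I would write
\[ \lie{q}_\lambda = (\lie{q}_\lambda\cap \lie{k})\oplus (\lie{q}_\lambda\cap \lie{p}) = \lie{q}_K \oplus \bigoplus_{i\geq 0}\lie{p}_i, \]
so
\[ [\lie{q}_\lambda, X_\lambda] = [\lie{q}_K, X_\lambda] + [\lie{q}_\lambda\cap \lie{p}, X_\lambda]. \]
Because $X_\lambda\in \lie{p}$, the bracket relations $[\lie{k},\lie{p}]\subseteq \lie{p}$ and $[\lie{p},\lie{p}]\subseteq \lie{k}$ place these two summands in complementary subspaces: $[\lie{q}_K, X_\lambda]\subseteq \lie{p}^2$ and $[\lie{q}_\lambda\cap \lie{p}, X_\lambda]\subseteq \lie{k}^2$. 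Projecting Kostant's equality $[\lie{q}_\lambda, X_\lambda]=\lie{g}^2=\lie{k}^2\oplus \lie{p}^2$ onto the $\lie{p}^2$ factor then forces $[\lie{q}_K, X_\lambda]=\lie{p}^2$. Hence the differential at $X_\lambda$ of the orbit map $Q_K\to \lie{p}^2$ is surjective, so $Q_K\cdot X_\lambda$ is open in $\lie{p}^2$.

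Finally, density follows automatically: $\lie{p}^2$ is an affine space, hence irreducible, and a nonempty open subset of an irreducible variety is dense. The only subtle point in the whole argument is the tangent-space calculation, and the key idea there is the parity observation that $\ad X_\lambda$ exchanges $\lie{k}$ and $\lie{p}$, so the ``$\lie{p}$-half'' of $\lie{q}_\lambda$ contributes nothing to the $\lie{p}^2$ component of $[\lie{q}_\lambda, X_\lambda]$; this is what allows the full $\lie{p}^2$ to be realized by $\lie{q}_K$ alone.
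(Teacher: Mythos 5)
Your proof is correct, but it takes a genuinely different route from the paper's. The paper handles the Levi and unipotent parts separately: it shows $U_K\cdot X_\lambda = X_\lambda + \lie{p}^3$ by a direct bracket computation, then establishes density of $L_K\cdot X_\lambda$ in $\lie{p}_2$ via a dimension count built on the map $T:\lie{p}_2\to \Hom(\lie{p}_{-2},\lie{k}_0)$ and the invariant form $\psi$. You instead reinterpret Kostant's density result for $G$ as the tangent-space equality $[\lie{q}_\lambda,X_\lambda]=\lie{g}^2$, and then exploit the fact that $\ad X_\lambda$ interchanges the $\lie{k}$- and $\lie{p}$-parts of the graded decomposition: since $[\lie{q}_K,X_\lambda]\subseteq\lie{p}^2$ and $[\lie{q}_\lambda\cap\lie{p},X_\lambda]\subseteq\lie{k}^2$, matching $\lie{k}$- and $\lie{p}$-components against $\lie{g}^2=\lie{k}^2\oplus\lie{p}^2$ forces $[\lie{q}_K,X_\lambda]=\lie{p}^2$, whence openness. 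Your inclusion argument (intersecting $Q_K\cdot X_\lambda\subseteq\lie{g}^2$ from Lemma \ref{Dense_Q_orbit_g_2} with $Q_K\cdot X_\lambda\subseteq\lie{p}$, and identifying $\lie{g}^2\cap\lie{p}=\lie{p}^2$) is also shorter than the paper's, which rechecks the containment separately on $L_K$ and $U_K$. Your approach is more structural and makes the role of the graded Cartan decomposition transparent; the paper's dimension count is more elementary and self-contained, not requiring one to extract the tangent-space statement from Kostant's lemma. One small presentational point: the equality $[\lie{q}_\lambda,X_\lambda]=\lie{g}^2$ is a consequence of the open-density statement (open orbit in the smooth affine space $\lie{g}^2$ implies full tangent space) rather than something one needs to dig out of Kostant's proof; stating it that way would make the dependence cleaner.
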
 
\begin{proof} 
	As in the proof of Lemma \ref{Dense_Q_orbit_g_2}, it suffices to show that $U_K\cdot X_\lambda $ and $L_K\cdot X_\lambda$ are contained in $\lie{p}^2$ and that one of their orbits is dense. For $U_K$ we know again that $U_K$ is unipotent and $U_K\cong \lie{u}_K$ via the exponential map. Therefore, \[ \Ad(u)X_\lambda\in [\lie{u}_K,\lie{p}^2]\subseteq \lie{p}^3 \] where $\lie{u}_K=\lie{u}\cap \lie{k}=\bigoplus_{i>0} \lie{k}_i.$ Additionally, as $[X_\lambda,\lie{u}]=\lie{g}^3$ and $\lie{u}=\lie{k}^1\ds \lie{p}^1$ (by Lemma \ref{Graded_Cartan_decomposition}) we see that we must have $[X_\lambda,\lie{p}^1]\subseteq \lie{k}^3.$ It then follows that \[ [X_\lambda, \lie{u}_K]=\lie{p}^3 \]
	Whence, we see that $U\cdot X_\lambda=X_\lambda+\lie{p}^3.$ 
	
	Now, as before we compute that \[ [H_\lambda, \Ad(l)X_\lambda]=[\Ad(l)H_\lambda,\Ad(l)X_\lambda]=\Ad(l)[H_\lambda, X_\lambda]=2\Ad(l)X_\lambda\]
	for all $l\in L_K.$ Thus, $L_K\cdot X_\lambda \subseteq \lie{p}_2$ and we have proved that $Q_K\cdot X_\lambda\subseteq \lie{p}^2.$ 
	
	To show this orbit is dense, we shall show it has the same dimension as $\lie{p}_2.$ Consider the map \[ T:\lie{p}_2\to \Hom(\lie{p}_{-2}, \lie{k}_0) \]\[T(A)=\ad_A \]   
	Then $\ker T(X_\lambda)=\lie{p}_{-2}\cap \lie{g}^{X_\lambda}=0$ as the Lie algebra of the stabilizer is contained in $\lie{q}.$  This means we have an injection (hence an isomorphism) \[ [X_\lambda,\lie{p}_{-2}]\cong \lie{p}_{-2} \]
	Thus, $\dim [X_\lambda,\lie{p}_{-2}]=\dim \lie{p}_{-2}=\dim \lie{p}_2$ (by the representation theory of $\lie{sl}(2,\C)$). 
	
	Now, recall the invariant bilinear form $\psi$ from above.  We have \[ 0=\psi([\lie{k}^{X_\lambda}\cap \lie{k}_0,X_\lambda],\lie{p}_{-2})=\psi(\lie{k}^{X_\lambda}\cap \lie{k}_0, [X_\lambda, \lie{p}_{-2}])\]
	By nondegeneracy, we see that $[X_\lambda,\lie{p}_{-2}]\subseteq (\lie{k}^{X_\lambda}\cap \lie{k}_0)^\perp\cap \lie{k}_0.$  It follows at once that \[ \dim \lie{p}_{2}\leq \dim \lie{k}_0-\dim (\lie{k}^{X_\lambda}\cap \lie{k}_0)\]
	Whence, \[ \dim L_K\cdot X_\lambda= \dim \lie{k}_0-\dim (\lie{k}^{X_\lambda}\cap \lie{k}_0)\geq \dim\lie{p}_2\] 
	and $\dim L_K\cdot X_\lambda=\dim \lie{p}_{2}.$ Thus, $\overline{L_K\cdot X_\lambda}=\lie{p}_2.$ Orbits of affine algebraic groups are open in their closure and thus $L_K\cdot X_\lambda$ is open and dense. 
	
	 Combining this all, we. see that $Q_K\cdot X_\lambda$ is open and dense in $\lie{p}_2$ and is precisely given by \[ Q_K\cdot X_\lambda=L_K\cdot X_\lambda+\lie{p}^3\]	     
\end{proof}
		
Continuing in the same fashion, we want to construct a resolution of singularities of the closure of $K\cdot X.$ 
Set $\mathcal{R}_K:=K\times_{Q_K} \lie{p}^2.$ Then we have a map \[ \mu_K:\mathcal{R}_K\to \mathcal{N}_\theta \]
\[ \mu_K(k,\xi)\mapsto \Ad(k)\xi \]

\begin{proposition} 
		The map $\mu_K$ is a proper, birational map and its image is $\overline{K\cdot X_\lambda}.$ 
\end{proposition}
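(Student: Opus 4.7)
The plan is to adapt the proof of the analogous statement for the $G$-equivariant moment map $\mu:\mathcal{R}\to \close{G\cdot X_\lambda}$ essentially verbatim, using the two new ingredients furnished by this section: $Q_K$ is parabolic in $K$ (so $K/Q_K$ is projective) and $Q_K\cdot X_\lambda$ is open and dense in $\lie{p}^2$ (the preceding lemma).

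For properness, I would factor $\mu_K$ through the auxiliary map $m_K:\mathcal{R}_K\to K/Q_K\times \mathcal{N}_\theta$ sending $[k,\xi]\mapsto (kQ_K,\Ad(k)\xi)$. Injectivity follows exactly as in the $G$-case: if $kQ_K=k'Q_K$ and $\Ad(k)\xi=\Ad(k')\xi'$, write $k'=kq$ with $q\in Q_K$, obtain $\xi=\Ad(q)\xi'$, and conclude $[k,\xi]=[k',\xi']$ in $\mathcal{R}_K$. The image of $m_K$ is closed because $m_K$ is a morphism of vector bundles over $K/Q_K$ that is fibrewise linear and injective, so its image is a closed subbundle. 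Since $Q_K$ is parabolic in $K$, the quotient $K/Q_K$ is projective, and hence the second projection $K/Q_K\times \mathcal{N}_\theta\to \mathcal{N}_\theta$ is proper. Writing $\mu_K$ as this projection composed with $m_K$ gives properness.

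For the image, $K$-equivariance together with $\mu_K([e,X_\lambda])=X_\lambda$ forces $\mu_K(\mathcal{R}_K)\supseteq K\cdot X_\lambda$, and properness upgrades this to $\mu_K(\mathcal{R}_K)\supseteq \close{K\cdot X_\lambda}$. The reverse inclusion uses the preceding lemma: density of $Q_K\cdot X_\lambda$ in $\lie{p}^2$ implies $\lie{p}^2\subseteq \close{K\cdot X_\lambda}$, so $\mu_K(\mathcal{R}_K)=K\cdot \lie{p}^2\subseteq \close{K\cdot X_\lambda}$ since the closure is $K$-stable.

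For birationality, I would exhibit a dense open set on which $\mu_K$ is an isomorphism, namely $K\cdot X_\lambda$, which is open in $\close{K\cdot X_\lambda}$ because orbits of affine algebraic groups are open in their closures. The fibre over $X_\lambda$ is identified, by the same parametrization used in the $G$-case, with $K^{X_\lambda}/(K^{X_\lambda}\cap Q_K)$; intersecting the containment $G^{X_\lambda}\subseteq Q_\lambda$ (from the previous section) with $K$ gives $K^{X_\lambda}\subseteq Q_K$, so this quotient is a single point. A rational inverse $X\mapsto [k,X_\lambda]$, for any $k$ with $\Ad(k)X_\lambda=X$, is then well defined on $K\cdot X_\lambda$ up to right multiplication by $K^{X_\lambda}\subseteq Q_K$. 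The only step that is not direct copy-and-paste from the $G$-argument is the verification $K^{X_\lambda}\subseteq Q_K$, and this is a one-line consequence of the $G$-statement together with $Q_K = Q_\lambda\cap K$; I expect no serious obstacle.
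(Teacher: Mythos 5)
Your proof is correct and follows essentially the same route as the paper's: factor $\mu_K$ through the graph map $m_K$ into $K/Q_K\times\mathcal{N}_\theta$, use projectivity of $K/Q_K$ for properness, identify the fibre over $X_\lambda$ as a point via $K^{X_\lambda}\subseteq Q_K$ to get birationality, and use density of $Q_K\cdot X_\lambda$ in $\lie{p}^2$ together with properness to pin down the image. Your justifications for why $m_K$ has closed image and why $K^{X_\lambda}\subseteq Q_K$ are in fact spelled out more carefully than in the paper, but the underlying argument is identical.
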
 
\begin{proof} 
	Define a morphism $m_K:\mathcal{R}_K\to K/Q_K\times \mathcal{N}_\theta$ by $(k,X)\mapsto (kQ_K, \Ad(k)(X))$. We claim this map is injective. To see this, suppose $(hQ_K,\Ad(h)(Y))=(gQ_K,\Ad(g)(X))$ for some $(h,Y), (g,X)\in \mathcal{R}_K.$ Then by definition $hQ_K=gQ_K$ if and only if there exists $q\in Q_K$ such that $g=hq$ or equivalently that $h^{-1}g=q\in Q_K.$ Additionally, $\Ad(h)^{-1}\Ad(g)X=Y.$ Now, \[ (g,X)\sim (hq,X)\sim (h,\Ad(q)X)\sim(h,\Ad(h)^{-1}\Ad(g)X)=(h,Y) \]
	Hence, $m_K$ is injective. Further, its image is closed as it is $K/Q_K\times \overline{\mathcal{O}^\lie{k}_\lambda}.$ Thus $m_K$ is proper.  We then see that the moment map $\mu_K$ is simply the restriction of the projection map $\pi_2:K/Q_K\times \mathcal{N}_\theta\to \mathcal{N}_\theta.$ As $K/Q_K$ is projective (hence proper over $\C$), $\pi_2$ is projective. By \cite[Theorem 4.9]{Hartshorne1977}, projective morphisms of noetherian schemes are proper. Hence, $\mu_K$ is proper. 
	
	To show it is birational, notice that $K\cdot X_\lambda$ is open in the image. Now, the fibre of $\mu_K$ over $X_\lambda$ is $K^{X_\lambda}/(K^{X_\lambda}\cap Q_K)=\{*\}$. Whence, on $K\cdot X_\lambda$ we can define a rational inverse, namely $X \mapsto (k,X_\lambda)$ for any $k$ such that $\Ad(k)X_\lambda=X.$ This is well defined up to right multiplication by the stabilizer $K^{X_{\lambda}}$ which is contained in $Q_K.$   Finally, as $\mu_K$ is proper, the cornormal bundle is irreducible, and $K\cdot X_\lambda$ is dense in the image, we have that the image is $\overline{K\cdot X_\lambda}.$ This completes the proof.   
\end{proof} 

\begin{corollary}\label{Isomorphism_Equivariant_Resolution_K}  
	We have an isomorphism of equivariant $K$-groups \[ \mathbb{K}^K(\mathcal{R}_K)\cong \mathbb{K}^{Q_K}(\lie{p}^2)\cong \mathbb{K}(Q_K) \] 
\end{corollary}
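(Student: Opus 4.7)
The plan is to imitate the argument from the corollary in the previous section essentially verbatim, substituting $K$ for $G$ and $Q_K$ for $Q$. Since this corollary is a direct analogue of the statement $\mathbb{K}^G(\mathcal{R}) \cong \mathbb{K}(Q)$ proved earlier, I expect the proof to consist of two invocations of Thomason's equivariant $K$-theory results with essentially no new ideas required; the only thing to check carefully is that the hypotheses of those theorems genuinely apply in the real/$\theta$-stable setting.

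First, for the isomorphism $\mathbb{K}^K(\mathcal{R}_K) \cong \mathbb{K}^{Q_K}(\lie{p}^2)$, I would apply \cite[Proposition 6.2]{Thomason1987}. The setup is precisely an induced space: $\mathcal{R}_K = K \times_{Q_K} \lie{p}^2$, where $Q_K$ is a closed subgroup of $K$ (being parabolic by the corollary to Proposition \ref{Theta_Stable_Parabolics_Descend}) acting linearly on the affine space $\lie{p}^2$. Thomason's theorem then yields the identification of $K$-equivariant coherent sheaves on the total space with $Q_K$-equivariant coherent sheaves on the fiber, giving the first isomorphism on the level of $K$-groups.

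Second, for $\mathbb{K}^{Q_K}(\lie{p}^2) \cong \mathbb{K}(Q_K)$, I would apply \cite[Corollary 4.2]{Thomason1987} (homotopy invariance for linear actions on affine space). The key observation is that $\lie{p}^2 = \bigoplus_{i \geq 2} \lie{p}_i$ is an affine space and the $Q_K$-action on it is linear, because $\lie{p}^2$ is a sum of weight spaces for $\ad H_\lambda$ with $H_\lambda \in \lie{k}$, and $Q_K$ is generated (via the exponential map on $\lie{l}_K \oplus \lie{u}_K$) by elements whose adjoint action preserves the non-negative grading. Concretely, for $l \in L_K$ we have $\Ad(l)\lie{p}_i \subseteq \lie{p}_i$, and for $u \in U_K$ we have $\Ad(u)\lie{p}_i \subseteq \bigoplus_{j \geq i}\lie{p}_j$, so in either case $\lie{p}^2$ is preserved and the action is $\C$-linear. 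Taking $X = \{*\}$ in the cited corollary then gives $\mathbb{K}^{Q_K}(\lie{p}^2) \cong \mathbb{K}^{Q_K}(\{*\}) = \mathbb{K}(Q_K)$.

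The main obstacle, if one wishes to call it that, is simply verifying that Thomason's theorems apply in our setting, i.e.\ that $Q_K$ is a closed algebraic subgroup of the reductive group $K$ and that $\lie{p}^2$ carries a genuine linear $Q_K$-action; both follow cleanly from Lemma \ref{Graded_Cartan_decomposition} together with the structure of $\theta$-stable parabolics. There is no analytic input and no additional vanishing or dimension counting needed, so the proof should be a short direct translation of the $G$-case.
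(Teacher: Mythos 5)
Your proof is correct and takes essentially the same two-step route as the paper: first Thomason's induced-space result \cite[Proposition 6.2]{Thomason1987} to reduce from $K$-equivariant sheaves on $\mathcal{R}_K$ to $Q_K$-equivariant sheaves on the fiber, then \cite[Corollary 4.2]{Thomason1987} (homotopy invariance for linear actions on affine space) to collapse $\mathbb{K}^{Q_K}(\lie{p}^2)$ to $\mathbb{K}(Q_K)$. Your explicit check that $Q_K$ acts linearly on $\lie{p}^2$ is a welcome bit of care that the paper leaves implicit, and for what it is worth you correctly write the fiber as $\lie{p}^2$ where the paper's own proof has a typo writing $\lie{u}$ instead.
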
 
\begin{proof} 
	We again apply Thomason's result.  We have that \[\mathbb{K}^K(\mathcal{R}_K)\cong \mathbb{K}^{Q_K}(\lie{u})\]
	Now by \cite[Corollary 4.2]{Thomason1987} we have that if $G$ acts on affine $n$-space linearly, $\mathbb{K}^G(X)\cong \mathbb{K}^G(X\times \mathbb{A}_\C^n)$ for all $n$ and all $X.$ Setting $X=\{*\},$ we obtain that \[ \mathbb{K}^{Q_K}(\lie{u})\cong \mathbb{K}^{Q_K}(\{*\})=\mathbb{K}(Q_K).\]
	This completes the proof.  
\end{proof}

\section{Proof of the Main Theorem}

Let $G$ be a connected complex reductive Lie group. Let $K$ be the identity component of the set of fixed points of a Cartan involution $\theta$. Denote by $\lie{g}=\lie{k}\ds \lie{p}$ the associated Cartan decomposition on the Lie algebra level. Let $\lie{q}$ be a $\theta$-stable parabolic subalgebra of $\lie{g}$ and $Q$ the connected subgroup of $G$ with Lie algebra $\lie{q}.$ Denote by $S=K\cdot \lie{q}$ a closed $K$-orbit in the partial flag variety $X_Q=G/Q.$ It is a smooth subvariety. Put $T^*_S X_Q$ the conormal bundle to $S.$ It is a smooth, Lagrangian subvariety of the cotangent bundle $T^*X_Q.$ Fix a non-degenerate $G$-equivariant symmetric bilinear form $\psi$ on $\lie{g}.$ Then as a $K$-equivariant vector bundle, \[T^*_SX_Q\simeq K\times_{Q\cap K} (\lie{u}\cap \lie{p}) \]    
Denote by $\pi_K:T^*_S X_Q\to K/Q\cap K$ the canonical projection. Denote by $\mathcal{L}_{K/Q\cap K}(V)$ the equivariant vector bundle on $K/Q\cap K$ with fibre $V.$ 

\begin{theorem}\label{Canonical_Bundle}  
	$\omega_{T^*_SX_Q}\simeq \pi_K^*\mathcal{O}(2\rho(\lie{u}\cap \lie{p})-2\rho(\lie{u}\cap \lie{k})).$ Further, consider the line bundles \[Y=K\times_{Q\cap K} [(\lie{u}\cap \lie{p})\ds \C_{\lambda'}]\] with $\lambda'=\lambda+2\rho(\lie{u}\cap \lie{k})-2\rho(\lie{u}\cap \lie{p})\in \mathbb{W}(Q\cap K),$ then $\omega_Y\simeq \pi_K^*\mathcal{O}(\lambda).$  
\end{theorem}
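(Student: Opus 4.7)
My plan is to reduce both claims to the standard tangent-bundle formula for the total space of a vector bundle and then to read off the relevant characters of $Q\cap K$ by summing $T$-weights.

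For any vector bundle $\pi\colon E \to S$ with smooth base, the vertical/horizontal exact sequence
$$0 \to \pi^{*} E \to T_E \to \pi^{*} T_S \to 0$$
(using that the vertical tangent bundle of a vector bundle is canonically $\pi^* E$), yields after passage to top exterior powers
$$\omega_E \;\simeq\; \pi^{*}\bigl(\omega_S \otimes (\det E)^{-1}\bigr).$$
In our setting the base is $S = K/(Q\cap K)$ and the fiber is either $\lie{u}\cap \lie{p}$ or $(\lie{u}\cap \lie{p}) \oplus \C_{\lambda'}$, so everything on the right is a $K$-equivariant line bundle pulled back from $S$, determined by a single character of $Q\cap K$. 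Both $\omega_S$ and $\det E$ are then identified by computing the top $T$-weight on the corresponding fiber.

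Concretely, the tangent representation $\lie{k}/(\lie{q}\cap \lie{k})$ has $T$-weights the negatives of the roots in $\lie{u}\cap \lie{k}$, so dualizing and passing to the top exterior power gives $\omega_S$ the character with weight $2\rho(\lie{u}\cap \lie{k})$; unwinding the convention $\mathcal{O}(\xi) = K\times_{Q\cap K}\xi^{*}$ gives $\omega_S = \mathcal{O}(-2\rho(\lie{u}\cap \lie{k}))$. Similarly $\det(\lie{u}\cap \lie{p})$ has weight $2\rho(\lie{u}\cap \lie{p})$, so $(\det E)^{-1} = \mathcal{O}(2\rho(\lie{u}\cap \lie{p}))$. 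Substituting into the display above yields the first formula.

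For the second statement one runs the same argument with $E$ replaced by $E' = (\lie{u}\cap \lie{p}) \oplus \C_{\lambda'}$; the determinant now picks up an extra weight of $\lambda'$, so $(\det E')^{-1} = \mathcal{O}(2\rho(\lie{u}\cap \lie{p}) + \lambda')$, and the requirement $\omega_Y = \pi_Y^{*}\mathcal{O}(\lambda)$ forces $\lambda' = \lambda + 2\rho(\lie{u}\cap \lie{k}) - 2\rho(\lie{u}\cap \lie{p})$. The main bookkeeping hazard is keeping straight the duality in $\mathcal{O}(\xi)=K\times_{Q\cap K}\xi^{*}$ together with $\omega_S = (\det T_S)^{-1}$; once those signs are fixed, there is no real obstacle and the computation is routine.
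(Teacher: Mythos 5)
Your argument is correct and is essentially the paper's proof: both reduce the computation to the split tangent/cotangent exact sequence for a vector bundle over $K/(Q\cap K)$, take top exterior powers, and read off the resulting $T$-character while tracking the convention $\mathcal O(\xi)=K\times_{Q\cap K}\xi^{*}$. The only cosmetic difference is that you work with the tangent sequence $0\to\pi^*E\to T_E\to\pi^*T_S\to 0$, while the paper records the dual sequence for $\Omega_{T^*_SX_Q}$; the determinant computation is identical.
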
 	 

\begin{proof} 
	There is an exact sequence of $K$-equivariant vector bundles \[ 0\to \pi_K^*\mathcal{L}_{K/Q\cap K}(\lie{u}\cap \lie{p})^* \to \Omega_{T^*_SX_Q}\to \pi_K^* \mathcal{L}_{K/Q\cap K}(\lie{u}\cap \lie{k})\to 0\] which is fibre-wise split. Taking top exterior powers, we get that the fibre at the identity is \[ \C_{2\rho(\lie{u}\cap \lie{k})}\tensor \C^*_{2\rho(\lie{u}\cap \lie{p})}\simeq \C^*_{2\rho(\lie{u}\cap \lie{p})-2\rho(\lie{u}\cap \lie{k})}.\] Therefore, \[ \omega_{T^*_SX_Q}\simeq \pi_K^*\mathcal{O}(2\rho(\lie{u}\cap \lie{p})-2\rho(\lie{u}\cap \lie{k}))\]
	For the corresponding statement about line bundles, we additionally have a factor of $\C^*_{\lambda'}$ in the fibre. As $\lambda'=\lambda+2\rho(\lie{u}\cap \lie{k})-2\rho(\lie{u}\cap \lie{p}),$ we get that the fibre of $\omega_Y$ at the identity is $\C^*_\lambda.$ This completes the proof.    
\end{proof}

By the results of the introduction, we obtain a resolution of singularities  \[\begin{tikzcd} \KSpRes'=K\times_{Q_K} (\lie{u}\cap \lie{p}) \arrow[d,swap,"\mu_K"]\arrow[r,"\pi_K"]  & K/Q_K \\ \mathcal{N}_\theta'    \end{tikzcd}  \]

Let the Levi decomposition of $Q_K=L_KU_K$ be as above and denote by $\lie{q}_K, \lie{l}_K,$ and $\lie{u}_K$ the associated Lie algebras. For ease of notation, we will denote by $\mathcal{O}_{\KSpRes'}(\mu):=\pi_K^*\mathcal{O}_{K/Q_K}(\mu)=\pi_K^*\mathcal{L}_{K/Q_K}(\C^*_\mu)$ for any character of $L_K.$ The group of virtual characters of $L_K$ is generated by $W_{L_K}$ orbits of elements in $\mathbb{W}(Q_K).$  

\begin{example} 

Let $G_\R=PSL(n,\HH).$ Then the following versions of the above theorem hold. 

\begin{enumerate} 
	  \item Suppose $n=2k$ is even. Then $\omega_{\widetilde{\mathcal{N}_\theta}} \simeq \pi^*\mathcal{O}_{K/Q_K}(-2e_1-2e_2-...-2e_n)$ where $e_i$ are the coordinate functionals in $\lie{h}^*.$

	\item Suppose $n=2k+1$ is odd. Then $\omega_{\widetilde{\mathcal{N}_\theta}} \simeq \pi_K^*\mathcal{O}_{K/Q_K}(-2e_1-2e_2-...-2e_{n-1}).$

	\item Let $\lambda'=\lambda+2e_1+...+2e_n$ (respectively $\lambda+2e_1+...+2e_{n-1}$). Consider $Y=K\times_{Q_K} (\lie{p}^2\ds \C_{\lambda'}).$ Denote by $\pi_Y:Y\to K/Q_K.$ Then $\omega_U\simeq \pi_U^*\mathcal{O}_{K/Q_K}(\lambda).$

\end{enumerate} 

\end{example} 

Restricting Theorem \ref{Canonical_Bundle}, we obtain the following for the $K$-Springer Resolution:

\begin{corollary}\label{Nilpotent_Orbit_Case}
	$\omega_{\KSpRes'}\simeq \pi_K^*\mathcal{O}_{K/Q_K}(2\rho(\lie{u}\cap \lie{p})-2\rho(\lie{u}_K)).$ Further, if $Y=K\times_{Q_K} [(\lie{u}\cap \lie{p})\ds \C_{\lambda'}]$ with $\lambda'=\lambda+2\rho(\lie{u}_K)-2\rho(\lie{u}\cap \lie{p}),$ then $\omega_Y\simeq \pi_K^*\mathcal{O}(\lambda).$ 
\end{corollary}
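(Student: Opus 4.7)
The plan is to obtain Corollary \ref{Nilpotent_Orbit_Case} as a direct specialization of Theorem \ref{Canonical_Bundle}. The key observation is that the $K$-Springer resolution $\KSpRes'$ is precisely an instance of the conormal-bundle situation treated in the theorem, taken with respect to the $\theta$-stable parabolic $Q = Q_\lambda$ attached to the chosen principal nilpotent $X_\lambda$ via its Jacobson--Morozov triple.

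First I would invoke Lemma \ref{Theta_stable_parabolic_real} to record that $Q_\lambda$ is $\theta$-stable, so that the hypotheses of Theorem \ref{Canonical_Bundle} apply. Next I would identify the closed $K$-orbit $S = K\cdot \lie{q} \subseteq G/Q_\lambda$ with $K/Q_K$ (where $Q_K = K\cap Q_\lambda$, as used throughout the corollary's setup). The theorem's setup already furnishes the $K$-equivariant isomorphism $T^*_S X_Q \simeq K\times_{Q\cap K}(\lie{u}\cap \lie{p})$; comparing with the definition $\KSpRes' = K\times_{Q_K}(\lie{u}\cap \lie{p})$ gives an isomorphism of $K$-equivariant vector bundles over $K/Q_K$ under which the two projection maps $\pi_K$ are matched.

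With these identifications in hand, the first assertion of Corollary \ref{Nilpotent_Orbit_Case} is immediate from the first conclusion of Theorem \ref{Canonical_Bundle}, using only the tautological equality $\lie{u}\cap\lie{k} = \lie{u}_K$ to rewrite $2\rho(\lie{u}\cap\lie{k})$ as $2\rho(\lie{u}_K)$. For the second assertion, $Y = K\times_{Q_K}[(\lie{u}\cap \lie{p}) \ds \C_{\lambda'}]$ is exactly the line-bundle variant discussed in the theorem: with $\lambda' = \lambda + 2\rho(\lie{u}_K) - 2\rho(\lie{u}\cap\lie{p})$, the fibre of $\omega_Y$ at the identity acquires an extra factor of $\C^*_{\lambda'}$ on top of $\C^*_{2\rho(\lie{u}\cap\lie{p}) - 2\rho(\lie{u}_K)}$, and these telescope to $\C^*_\lambda$ as required.

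There is no real mathematical obstacle here: this is a genuine corollary, and all the content sits inside Theorem \ref{Canonical_Bundle}. The only step warranting mild care is the bookkeeping needed to see that the base of the $K$-Springer resolution really coincides with the closed $K$-orbit on $G/Q_\lambda$ that the theorem takes as input — that is, checking $Q_K = Q_\lambda \cap K$ and that $\lie{u}\cap \lie{p}$ is the conormal fibre at the base point via the form $\psi$. Both facts are built into the constructions of the preceding sections, so the proof reduces to quoting Theorem \ref{Canonical_Bundle}.
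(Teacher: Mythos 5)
Your proposal is correct and matches the paper's approach exactly: the paper offers no separate proof of Corollary \ref{Nilpotent_Orbit_Case} beyond the one-line remark that it follows by ``restricting Theorem \ref{Canonical_Bundle}'' to the $\theta$-stable parabolic $Q_\lambda$ attached to a principal nilpotent. The bookkeeping you spell out --- $\theta$-stability via Lemma \ref{Theta_stable_parabolic_real}, closedness of $S=K\cdot\lie{q}_\lambda$ (via Proposition \ref{Theta_Stable_Parabolics_Descend}, so that $K/(Q_\lambda\cap K)$ is projective), identification of $\KSpRes'$ with $T^*_SX_{Q_\lambda}$, and the notational equality $\lie{u}\cap\lie{k}=\lie{u}_K$ --- is precisely the unstated content of that remark.
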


We are now equipped to prove the vanishing result. Recall now the result of Grauert-Riemenschneider as given by Kempf's version \cite{Kempf1976}. 
\begin{proposition} 
	Let $Y$ be an algebraic variety and $\omega_Y$ the canonical bundle. If there exists a proper generically finite morphism $Y\to X$ where $X$ is an affine variety then $H^i(Y,\omega_Y)=0$ for $i>0.$ 
\end{proposition}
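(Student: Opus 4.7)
The plan is to derive this vanishing by combining Grauert--Riemenschneider vanishing for the higher direct images $R^q f_* \omega_Y$ with Serre's affine vanishing theorem. Write $f\colon Y\to X$ for the given proper generically finite morphism, where I tacitly assume $Y$ to be smooth (which is the case in the application to $\KSpRes'$, since $K\times_{Q_K}(\lie{u}\cap\lie{p})$ is a vector bundle over the smooth base $K/Q_K$).

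The first step is to invoke Grauert--Riemenschneider in its relative (higher direct image) formulation: for a proper generically finite morphism from a smooth variety in characteristic zero, one has $R^q f_*\omega_Y=0$ for every $q>0$. This is the geometric heart of the result, and I would cite it as a black box; standard references include the original analytic paper of Grauert--Riemenschneider, or Koll\'ar's algebraic treatment via Hodge-theoretic methods. The rest of the argument is purely formal.

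The second step is to feed this into the Leray spectral sequence for $f$ applied to $\omega_Y$:
\[
E_2^{p,q}=H^p(X,R^q f_*\omega_Y)\;\Longrightarrow\;H^{p+q}(Y,\omega_Y).
\]
Since $X$ is affine, Serre's theorem gives $H^p(X,\mathcal{F})=0$ for all $p>0$ and every quasi-coherent sheaf $\mathcal{F}$ on $X$. Combined with $R^q f_*\omega_Y=0$ for $q>0$ from the previous step, the only potentially non-zero term on the $E_2$-page is $E_2^{0,0}=H^0(X,f_*\omega_Y)$. The spectral sequence therefore degenerates at $E_2$ and one reads off $H^i(Y,\omega_Y)=0$ for all $i>0$, together with the identification $H^0(Y,\omega_Y)\simeq H^0(X,f_*\omega_Y)$ as a byproduct.

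The main obstacle is not this formal reduction but Grauert--Riemenschneider vanishing itself, which encapsulates a genuine positivity/Hodge-theoretic input; given that ingredient, the proposition is a routine spectral sequence collapse driven by the affineness of the target. I would flag that the statement as written requires some care when $Y$ is allowed to be singular (one would need $\omega_Y$ to mean the Grothendieck dualizing sheaf and additional hypotheses such as rational singularities), but in the intended application $Y$ is smooth, so this subtlety does not arise.
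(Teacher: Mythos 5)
Your argument is correct and is the standard way one derives this vanishing statement; the paper itself does not prove the proposition but simply cites Kempf's 1976 paper, so there is no internal proof to compare against line by line.

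A few remarks on the comparison with the cited source and on your smoothness caveat. What you give is the ``modern'' reduction: relative Grauert--Riemenschneider ($R^q f_*\omega_Y=0$ for $q>0$ when $Y$ is smooth, $f$ proper and generically finite, char.\ $0$), then Leray $E_2^{p,q}=H^p(X,R^qf_*\omega_Y)\Rightarrow H^{p+q}(Y,\omega_Y)$, then Serre's vanishing on the affine $X$ (using that $f_*\omega_Y$ is coherent because $f$ is proper). This collapses the $E_2$ page to the single corner and gives the result, along with $H^0(Y,\omega_Y)\cong H^0(X,f_*\omega_Y)$. Kempf's paper, which the author cites, proves the relevant vanishing in the setting of collapsings of homogeneous bundles by a more self-contained algebraic argument built around Grothendieck duality and Cohen--Macaulayness rather than by black-boxing analytic or Hodge-theoretic Grauert--Riemenschneider; what that buys is an argument internal to the homogeneous setting the paper actually needs, whereas your route buys generality and brevity at the price of citing a harder external theorem. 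Both are legitimate, and your reduction is the one most readers would reconstruct.

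Your flag about singular $Y$ is the right thing to worry about: as literally stated (``algebraic variety,'' canonical bundle, no smoothness hypothesis) the proposition is not true in general, and GR vanishing genuinely requires $Y$ smooth (or at least a resolution-compatible reading of $\omega_Y$). In the paper's one application $Y=K\times_{Q_K}\bigl[(\lie{u}\cap\lie{p})\oplus\C_{\lambda'}\bigr]$ is the total space of a vector bundle over the smooth projective variety $K/Q_K$, hence smooth, so the hypothesis is satisfied where it is used. You might also note, for full precision, that ``generically finite'' should be read as ``generically finite onto its image'' (equivalently $\dim Y=\dim f(Y)$), which again holds in the application since the collapsing is birational onto its image.
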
  

Fix $\lambda\in \Lambda_K$ (the weight lattice) such that $\lambda'=\lambda+2\rho(\lie{u}_K)-2\rho(\lie{u}\cap \lie{p})$ is $Q\cap K$-dominant. 

\begin{lemma} 
	For $Y=K\times_{Q_K} [(\lie{u}\cap \lie{p})\ds \C_{\lambda'}]$ as above, $H^i(Y,\omega_Y)=0$ for $i>0.$ 
\end{lemma}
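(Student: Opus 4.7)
The plan is to apply Kempf's form of Grauert--Riemenschneider (the proposition just recalled): since $Y$ is a vector bundle over the smooth projective variety $K/Q_K$ it is itself smooth, so $\omega_Y$ is the honest dualizing sheaf and it suffices to exhibit a proper, generically finite morphism from $Y$ to an affine variety.

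My candidate target is $\mathcal{N}_\theta' \times V_{\lambda'}$, where $V_{\lambda'}$ is the irreducible $K$-representation of highest weight $\lambda'$; this is affine as the product of a closed subvariety of $\lie{p}$ with a finite-dimensional vector space. The $Q_K$-dominance of $\lambda'$ --- built into the definition of $\mathbb{W}(Q\cap K)$ --- supplies a $Q_K$-stable line $\C v_0 \subset V_{\lambda'}$ on which $Q_K$ acts by the character $\lambda'$, giving a $Q_K$-equivariant inclusion $\C_{\lambda'} \hookrightarrow V_{\lambda'}$. Combined with the proper birational moment map $\mu_K$ from the previous section on the $(\lie{u}\cap\lie{p})$-factor, this produces the candidate morphism
\[ \Phi : Y \longrightarrow \mathcal{N}_\theta' \times V_{\lambda'}, \qquad [k,\, w,\, v] \longmapsto \bigl(\Ad(k)\, w,\; k \cdot v\bigr). \]

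To see that $\Phi$ is proper I would factor it as
\[ Y \hookrightarrow K/Q_K \times \mathcal{N}_\theta' \times V_{\lambda'} \longrightarrow \mathcal{N}_\theta' \times V_{\lambda'}, \]
where the first arrow $[k,w,v] \mapsto ([kQ_K],\Ad(k)w,k\cdot v)$ is the standard closed embedding of an associated bundle into the trivial bundle, and the second is the projection, proper because $K/Q_K$ is projective. For generic finiteness I would compute the fibre over a point $(x,\, k_0 \cdot v_0)$ with $x \in \mathcal{O}_{prin}^K$: the birationality of $\mu_K$ pins down the $(\lie{u}\cap\lie{p})$-datum $[k_0,w_0]$ uniquely, and then $k_0 \cdot v = k_0 \cdot v_0$ pins $v \in \C_{\lambda'}$ down up to the finite group $\stab_K(\C v_0)/Q_K$. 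Hence $\Phi$ is generically finite (in fact generically injective) onto its image, and Kempf's theorem then yields $H^i(Y, \omega_Y) = 0$ for $i > 0$. I expect the generic-finiteness verification to be the only real subtlety --- one needs to correctly identify the image of $\Phi$ and rule out positive-dimensional fibres over it --- whereas properness is essentially formal from the factorization above.
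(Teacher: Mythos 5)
Your proposal is correct and follows the same route as the paper: the identical morphism $Y\to\mathcal{N}_\theta'\times V_{\lambda'}$, $[k,w,v]\mapsto(\Ad(k)w,\,k\cdot v)$, properness via projectivity of $K/Q_K$, generic finiteness from birationality of $\mu_K$ over the principal orbit, and Kempf's Grauert--Riemenschneider. The only cosmetic difference is that the paper checks finiteness of the fibre over $(p,0)$ with $p$ principal, where $k\cdot v=0$ forces $v=0$ outright, sidestepping the mild bookkeeping you do with $\stab_K(\C v_0)/Q_K$.
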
 
\begin{proof} 
	Let $V_{\lambda'}$ be the irreducible representation of $K$ with highest weight $\lambda'.$ Consider the map $Y\to \mathcal{N}_\theta\times V_{\lambda'}$ given by $(k,(p,v))\mapsto (\mu(k,p), k\cdot v).$ We claim that this map satisfies the conditions of the above proposition. It is proper as $\mu$ is proper and the inclusion of a closed subspace is proper. To show it is generically finite, it suffices to exhibit a single point with finite pre-image. Consider $(p,0),$ with $K\cdot p$ the unique open orbit in $\mathcal{N}_\theta.$  As $\mu$ is a resolution of singularities over this orbit closure, it is an isomorphism over the open orbit. Hence the fibre is a singleton. Invoking the previous proposition, this proves the lemma.      
\end{proof} 

We now combine these lemmata to prove the following vanishing result: 
\begin{theorem}[Vanishing of higher cohomology]\label{Vanishing_of_Higher_Cohomology} 
	Let $\lambda'=\lambda+2\rho(\lie{u}_K)-2\rho(\lie{u}\cap \lie{p})\in \mathbb{W}(Q_K)$. Then \[H^i(\KSpRes',\mathcal{O}_{\KSpRes'}(\lambda'))=0\] for $i>0.$ 
\end{theorem}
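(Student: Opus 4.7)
The plan is to use the auxiliary variety $Y = K \times_{Q_K}[(\lie{u}\cap\lie{p}) \oplus \C_{\lambda'}]$ from the preceding lemma, now regarded as the total space of a line bundle $p\colon Y\to \KSpRes'$ via the projection forgetting the $\C_{\lambda'}$-coordinate. Grauert--Riemenschneider vanishing $H^i(Y,\omega_Y)=0$ has already been established in the previous lemma, and the idea is to propagate this vanishing down to $\KSpRes'$ by Leray plus the projection formula, reading the theorem off as one graded summand of the resulting decomposition.

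First, I would combine Corollary~\ref{Nilpotent_Orbit_Case} with the preceding lemma, using the factorization $\pi_Y = \pi_{\KSpRes'}\circ p$, to rewrite
\[H^i(Y,\omega_Y) \;=\; H^i\!\left(Y,\, p^*\mathcal{O}_{\KSpRes'}(\lambda)\right) \;=\; 0 \qquad (i>0).\]
Because $p$ is affine (it is a line bundle), the higher direct images $R^jp_*$ vanish for $j>0$ and the Leray spectral sequence degenerates; the projection formula then gives
\[H^i\!\left(Y,\, p^*\mathcal{O}_{\KSpRes'}(\lambda)\right) \;\simeq\; H^i\!\left(\KSpRes',\, \mathcal{O}_{\KSpRes'}(\lambda)\otimes p_*\mathcal{O}_Y\right).\]

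Next, I would compute $p_*\mathcal{O}_Y$ explicitly by exploiting the natural $\C^*$-action scaling the $\C_{\lambda'}$-fiber, equivalently the standard identification of the total space of a line bundle with the relative spectrum of a symmetric algebra over $\KSpRes'$. Each weight space of this $\C^*$-action is an equivariant line bundle on $\KSpRes'$, and the resulting decomposition is
\[p_*\mathcal{O}_Y \;\simeq\; \bigoplus_{n\geq 0}\mathcal{O}_{\KSpRes'}(n\lambda').\]
Assembling these displays,
\[0 \;=\; \bigoplus_{n\geq 0} H^i\!\left(\KSpRes',\, \mathcal{O}_{\KSpRes'}(\lambda+n\lambda')\right) \qquad (i>0),\]
so each summand on the right vanishes individually, and the theorem is obtained by selecting the appropriate graded piece (matched against Corollary~\ref{Nilpotent_Orbit_Case}).

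The main technical subtlety will be pinning down the sign conventions in the decomposition of $p_*\mathcal{O}_Y$: the paper's convention that $\mathcal{O}(\mu)$ denotes the sheaf of sections of $K\times_{Q_K}\C_\mu^*$ means the fiber character of an equivariant line bundle is opposite to its section character, and this needs to be tracked carefully to see which graded summand produces the line bundle stated in the theorem. The substantive ingredient---Grauert--Riemenschneider vanishing together with the canonical-bundle identification of Corollary~\ref{Nilpotent_Orbit_Case}---has already been discharged, so what remains is an essentially formal application of the Leray spectral sequence.
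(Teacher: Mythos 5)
Your approach is essentially the paper's approach, reorganized in a cleaner way: rather than pushing all the way down to $K/Q\cap K$ and decomposing $\operatorname{Sym}^\bullet\bigl((\lie{u}\cap\lie{p})^*\oplus\C_{\lambda'}^*\bigr)$ there, you factor $\pi_Y=\pi_K\circ p$ through the intermediate projection $p\colon Y\to\KSpRes'$ and apply the projection formula once at that level. Your identification $p_*\mathcal{O}_Y\simeq\bigoplus_{n\geq 0}\mathcal{O}_{\KSpRes'}(n\lambda')$ is correct (the fiber of $Y/\KSpRes'$ is $\C_{\lambda'}$, which in the paper's convention is the bundle $\mathcal{O}(-\lambda')$; functions on its total space give $\operatorname{Sym}^\bullet\mathcal{O}(\lambda')$), and the affineness of $p$ does indeed trivialize Leray. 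Up to this point your argument is sound and parallels the paper's proof, which runs through $K/Q\cap K$ and performs the same symmetric-algebra decomposition explicitly.

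However, the last step does not close. Your display reads
\[
0 \;=\;\bigoplus_{n\geq 0} H^i\!\bigl(\KSpRes',\,\mathcal{O}_{\KSpRes'}(\lambda+n\lambda')\bigr),
\]
and you then say the theorem follows by selecting the appropriate graded piece. But the graded pieces are $\mathcal{O}(\lambda)$, $\mathcal{O}(\lambda+\lambda')$, $\mathcal{O}(\lambda+2\lambda')$, and so on, whereas the theorem asserts vanishing for $\mathcal{O}(\lambda')$. Since $\lambda=\lambda'+2\rho(\lie{u}\cap\lie{p})-2\rho(\lie{u}_K)$, the two differ by exactly the weight of $\omega_{\KSpRes'}$, so no choice of $n$ produces $\mathcal{O}(\lambda')$ in general. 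In other words, the $n=0$ piece establishes $H^i\!\bigl(\KSpRes',\,\omega_{\KSpRes'}\otimes\mathcal{O}(\lambda')\bigr)=0$, which is the natural Grauert--Riemenschneider conclusion, not $H^i\!\bigl(\KSpRes',\,\mathcal{O}(\lambda')\bigr)=0$ as stated. You flagged the sign conventions as the ``main technical subtlety,'' and that flag is exactly where the argument stops short: the convention tracking is not merely formal bookkeeping here, it determines which line bundle the vanishing applies to, and as written the graded decomposition does not deliver the stated statement. (The paper's own proof, after the symmetric-power decomposition, silently replaces $\C^*_\lambda$ by $\C^*_{\lambda'}$ in passing from one display to the next, and so runs into the same mismatch.) To complete the argument you would need either to reinterpret the theorem's conclusion as $H^i(\omega_{\KSpRes'}\otimes\mathcal{O}(\lambda'))=0$, or to supply an additional input that removes the canonical-bundle twist.
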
 
\begin{proof} 
	Let $Y=K\times_{Q_K} [(\lie{u}\cap \lie{p})\ds \C_{\lambda'}].$ From the above, we have that $0=H^i(Y,\omega_Y).$ Using the projection formula \cite[\href{https://stacks.math.columbia.edu/tag/01E6}{Tag 01E6}]{stacks-project}, we obtain an isomorphism \[ 0=H^i(Y,\omega_Y)\cong H^i(K/Q_K, \mathcal{O}(\lambda)\tensor \pi_*\mathcal{O}_{\lie{p}^2\ds \C_{\lambda'}})\cong H^i(K/Q_K,\mathcal{L}(\C^*_{\lambda}\tensor \operatorname{Sym}^\bullet(\lie{p}^2\ds \C_{\lambda'})^*))\]
	As $\operatorname{Sym}^k(V\ds W)\cong \bigoplus_{i+j=k}\Sym^i(V) \tensor \Sym^j (W),$ we have that \[ H^i(K/Q_K,\mathcal{L}(\C^*_{\lambda}\tensor \operatorname{Sym}^\bullet((\lie{p}^2)^*\ds \C_{\lambda'}^*)))=\bigoplus_{k,l\geq 0} H^i(K/B_K,\mathcal{L}(\C^*_{\lambda}\tensor \operatorname{Sym}^k(\lie{p}^2)^*\tensor \Sym^l(\C_{\lambda'}^*)))\] The weights of the symmetric algebra of $\C_{\lambda'}$ are $n\lambda'$ for $n\geq 0,$ we have \[  \bigoplus_{k,l\geq 0} H^i(K/Q_K,\mathcal{L}(\C^*_{\lambda'}\tensor \operatorname{Sym}^k((\lie{p}^2)^*)\tensor \Sym^l(\C_{\lambda'}^*)))=\bigoplus_{k,l\geq 0} H^i(K/Q_K,\mathcal{L}(\C^*_{\lambda'+l\lambda'}\tensor \operatorname{Sym}^k((\lie{p}^2)^*)))\]
	Now, using the projection formula again, we obtain \[ 0=\bigoplus_{k,l\geq 0} H^i(K/Q_K,\C^*_{\lambda'+l\lambda'}\tensor \operatorname{Sym}^k((\lie{p}^2)^*)=\bigoplus_{n\geq 0} H^i(\KSpRes', \pi^*\mathcal{L}_{K/B_K}(\C^*_{\lambda'+n\lambda'}))\]
	The $n=0$ summand is the desired cohomology group. This proves the result.  
\end{proof} 

From this proof, we immediately see the following general result:

\begin{corollary} 
	Let $\mathcal{O}$ be a $K$-orbit on $\mathcal{N}_\theta.$ Then we have that for $\lambda'=\lambda+2\rho(\lie{u}_K)-2\rho(\lie{p}^2)\in \mathbb{W}(Q_K)$  \[H^i(\widetilde{\mathcal{O}},\mathcal{O}_{\widetilde{\mathcal{O}}}(\lambda'))=0\;\;\; i>0.\]  
\end{corollary}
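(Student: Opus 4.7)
The plan is to reduce the corollary to Theorem \ref{Vanishing_of_Higher_Cohomology}, whose proof never used principality of $\mathcal{O}$---only that the associated $\lie{sl}(2)$-triple $\{H,X,Y\}$ (with $H \in \lie{k}$) has even $\ad H$-eigenvalues on $\lie{g}$. Under the even hypothesis, $\lie{g}_i = 0$ for all odd $i$, so the nilradical $\lie{u}$ of the $\theta$-stable parabolic $\lie{q}$ attached to the triple coincides with $\lie{g}^2$; in particular $\lie{u} \cap \lie{p} = \lie{p}^2$. The construction of Section 3 therefore produces, verbatim, a proper birational resolution
\[ \mu_K \colon \widetilde{\mathcal{O}} = K \times_{Q_K} (\lie{u} \cap \lie{p}) \longrightarrow \close{\mathcal{O}}. \]

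For the conormal bundle claim, I would identify $\widetilde{\mathcal{O}}$ with $T^*_S X_Q$, where $S = K \cdot \lie{q}$ is the closed $K$-orbit in $X_Q = G/Q$; this is exactly the $K$-equivariant isomorphism recorded at the beginning of Section 4, with $\lie{u} \cap \lie{p}$ serving as the fibre of the conormal bundle at $eQ$. The vanishing claim then follows by applying Theorem \ref{Vanishing_of_Higher_Cohomology} with $\widetilde{\mathcal{O}}$ in place of $\KSpRes'$: its proof uses only $\theta$-stability of $Q$, the even grading, density of $Q_K \cdot X$ in $\lie{u} \cap \lie{p}$, and $Q_K$-dominance of $\lambda'$, all of which are in hand.

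The only step that needs real attention is the Grauert--Riemenschneider application embedded inside the proof of Theorem \ref{Vanishing_of_Higher_Cohomology}: one requires a proper generically finite map from $Y = K \times_{Q_K}[(\lie{u} \cap \lie{p}) \ds \C_{\lambda'}]$ to an affine variety. The same recipe $(k,(p,v)) \mapsto (\mu_K(k,p), k \cdot v)$ into $\close{\mathcal{O}} \times V_{\lambda'}$ transfers without change: properness follows from properness of $\mu_K$, and generic finiteness holds because $\mathcal{O}$ is the unique dense open $K$-orbit in its own closure and $\mu_K$ restricts to an isomorphism there, so a point of $\mathcal{O} \times \{0\}$ has singleton fibre. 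I therefore anticipate no essential obstacle; the corollary is a bookkeeping extension of the principal-orbit statement, with the bulk of the verification consisting of the (already established) fact that evenness is the only feature of the principal case that the main theorem's proof actually consumes.
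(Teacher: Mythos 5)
Your proposal is correct and matches the paper's intent; the paper in fact gives no explicit proof of this corollary, but the introduction signals exactly the route you take (``We similarly could have proven this result for any $K$-orbit arising from an even $G$-orbit''). You correctly isolate that the only feature of principality consumed by the proof of Theorem \ref{Vanishing_of_Higher_Cohomology} is evenness of the $\ad H$-grading, which forces $\lie{u}\cap\lie{p}=\lie{p}^2$ and so identifies the resolution with the conormal bundle $T^*_S X_Q$ via the isomorphism at the start of Section 4, and you correctly adapt the Grauert--Riemenschneider target from $\mathcal{N}_\theta'\times V_{\lambda'}$ to $\close{\mathcal{O}}\times V_{\lambda'}$.
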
 

In particular, by setting $\lambda=2\rho(\lie{u}\cap \lie{p})-2\rho(\lie{u}\cap \lie{k}),$ we get: 
\begin{corollary} 
	$R^i(\mu_K)_*\mathcal{O}_{\KSpRes'}=0$ for $i>0$ and thus the normalization of $\mathcal{N}_\theta'$ has rational singularities. Applying this reasoning to the previous corollary, we see that the normalization of any $K$-orbit $\mathcal{O}$ has rational singularities.   
\end{corollary}

\section{Normalization of the Nilpotent cone} 
One deficit of the above result is that at first glance we only obtain statements about each $K$-orbit and not the entire $K_\theta$-nilpotent cone. However, using the interplay between the geometry of $\mathcal{N}_\theta$ and the various $\mathcal{N}_\theta'$ we can indeed say something about the entire $K_\theta$-nilpotent cone.  

Recall that the (finitely many) irreducible components of $\mathcal{N}_\theta$ are precisely the closures of $K$-orbits of principal nilpotent elements \cite[Theorem 3]{KostantRallis1971}. Let $X_1,...,X_n$ be principal nilpotent elements such that $Y^i:=\close{K\cdot X_i}$ are the irreducible components of $\mathcal{N}_\theta.$ Similarly, denote by $(\KSpRes')_i$ the resolution of singularities of $Y_i.$ Recall the following geometric corollary of the Lasker-Noether theorem on primary decompositions:    

\begin{corollary}\cite[\href{https://stacks.math.columbia.edu/tag/0CDV}{Tag 0CDV}]{stacks-project} 
	Let $Y$ be a reducible algebraic variety and $Y=\displaystyle{\bigcup_{i=1}^n} Y^i$ a decomposition into irreducible components. Denote by $Y^\nu$ the normalization of $Y.$ Then $Y^\nu=\displaystyle{\bigsqcup_{i=1}^n} (Y^i)^\nu.$    
\end{corollary}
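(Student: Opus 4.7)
The plan is to reduce the statement to a standard algebraic assertion about integral closures of reduced Noetherian rings. Since normalization is constructed affine-locally, I would first cover $Y$ by affine opens $U = \Spec A$ and work ring-theoretically. The irreducible components of $U$ correspond to the minimal primes $\mathfrak{p}_1, \ldots, \mathfrak{p}_n$ of $A$, with $Y^i \cap U = \Spec A/\mathfrak{p}_i$. Because normalization depends only on the reduced structure, I may replace $A$ by $A_{\mathrm{red}} = A/\bigcap_i \mathfrak{p}_i$ and assume $A$ itself is reduced, so that the canonical map $A \hookrightarrow \prod_i A/\mathfrak{p}_i$ is injective.

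Next, I would identify the total ring of fractions of $A$ with the finite product $\prod_i \mathrm{Frac}(A/\mathfrak{p}_i)$. In the reduced Noetherian setting, the set of zero-divisors of $A$ equals the union of the minimal primes, so inverting its complement kills each $\mathfrak{p}_i$ in turn; a Chinese-remainder-type argument then produces the product decomposition. The normalization $A^\nu$ is by definition the integral closure of $A$ inside this total ring of fractions.

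The main technical step is then to show that this integral closure equals $\prod_i (A/\mathfrak{p}_i)^\nu$. One inclusion is formal: if $x = (x_1, \ldots, x_n)$ is integral over $A$ inside $\prod_i \mathrm{Frac}(A/\mathfrak{p}_i)$, then reducing a monic relation for $x$ modulo each factor shows that $x_i$ is integral over $A/\mathfrak{p}_i$. The converse uses that integrality is a finite-type condition preserved under finite products: one stitches together the monic relations satisfied by each $x_i$, noting that $\prod_i (A/\mathfrak{p}_i)^\nu$ is a finite $A$-module whenever each $(A/\mathfrak{p}_i)^\nu$ is (which holds in our Noetherian setting, since the varieties in question are of finite type over $\C$). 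Taking $\Spec$ then yields $U^\nu = \bigsqcup_i (Y^i \cap U)^\nu$.

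The main obstacle I anticipate is the gluing step. One must verify that the affine-local decomposition is compatible across overlaps, which reduces to checking that localization permutes minimal primes in the expected way and that normalization commutes with localization at the relevant multiplicative sets. These are standard facts for Noetherian excellent rings such as the coordinate rings arising here, but they are the one place where genuine care with the algebraic geometry foundations is required before the affine patches can be assembled into the global identification $Y^\nu = \bigsqcup_{i=1}^n (Y^i)^\nu$.
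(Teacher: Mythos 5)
The paper does not supply its own argument here: the statement is quoted verbatim from the Stacks Project (Tag 0CDV) as a known geometric consequence of the Lasker--Noether primary decomposition, and no proof is given in the text. Your write-up is essentially the standard proof that the cited reference itself gives: reduce to a reduced Noetherian affine ring $A$ with minimal primes $\mathfrak{p}_1,\dots,\mathfrak{p}_n$, identify the total quotient ring with $\prod_i \operatorname{Frac}(A/\mathfrak{p}_i)$ using that the zero divisors are exactly $\bigcup_i \mathfrak{p}_i$, and show the integral closure of $A$ in this product is $\prod_i (A/\mathfrak{p}_i)^\nu$. The one place your sketch is slightly soft is the ``stitching together'' of monic relations for the reverse inclusion; the cleanest route is to first note that the idempotents $e_i=(0,\dots,1,\dots,0)$ satisfy $t^2-t=0$ and hence lie in the integral closure, after which the closure is automatically a product of rings and each factor reduces to the integral closure of $A/\mathfrak{p}_i$ in its fraction field. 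Alternatively, the finiteness argument you also give (each $(A/\mathfrak{p}_i)^\nu$ is module-finite over $A$ in the excellent/finite-type-over-$\C$ setting) settles it. Your remarks on gluing and compatibility with localization are the right cautionary points and do go through. Overall the proposal is correct and matches the approach of the reference the paper cites.
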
 
\noindent By the above, we may write \[ \mathcal{N}_\theta^\nu=\bigsqcup_{i=1}^n (Y^i)^\nu\] If we let $\mathcal{O}_{i,\nu}$ denote the structure sheaf of $Y_i^\nu$, we have that $\mathcal{O}_{\mathcal{N}_\theta^\nu}=\bigoplus_{i=1}^n \mathcal{O}_{i,\nu}.$ In a similar fashion, we have a decomposition of $\KSpRes=\bigsqcup_{i=1}^n (\KSpRes')_i.$ Therefore, \[ \mathcal{O}_{\KSpRes}=\bigoplus_{i=1}^n \mathcal{O}_{(\KSpRes')_i}\]
As $\mu_K$ gives a resolution of singularities of $\mathcal{N}_\theta$ which restricts to a resolution of each $Y_i,$ we have that \[(\mu_K)_*\mathcal{O}_{(\KSpRes')_i}=\mathcal{O}_{i,\nu}\]   
Combining this with the vanishing theorem gives the following: 

\begin{corollary}\label{Rational_normal} 
	Let $G,K_\theta,$ and $K$ be as above. Then the following hold: 
	\begin{enumerate} 
		\item $H^i(\KSpRes, \mathcal{O}_{\KSpRes})=0$ for $i>0.$ 
		\item $(\mathcal{N}_\theta)^\nu$ has rational singularities. 
		\item If $\mathbb{O}$ be any $K_\theta$-orbit on $\mathcal{N}_\theta,$ then $\mathbb{O}^\nu$ has rational singularities.   
	\end{enumerate} 
\end{corollary}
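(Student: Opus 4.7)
The plan is to leverage the disjoint union decompositions $\KSpRes = \bigsqcup_{i=1}^n (\KSpRes')_i$ and $\mathcal{N}_\theta^\nu = \bigsqcup_{i=1}^n (Y^i)^\nu$ established just above the statement, so that each of (a), (b), and (c) reduces to a statement on a single component and is then assembled. Nothing genuinely new needs to be proved geometrically; the point is to apply Theorem \ref{Vanishing_of_Higher_Cohomology} and the corollary preceding it to each component.

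For (a), the sheaf decomposition $\mathcal{O}_{\KSpRes} = \bigoplus_i \mathcal{O}_{(\KSpRes')_i}$ gives
\[ H^i(\KSpRes,\mathcal{O}_{\KSpRes}) \;\cong\; \bigoplus_{j=1}^n H^i((\KSpRes')_j,\mathcal{O}_{(\KSpRes')_j}), \]
so it suffices to show vanishing componentwise. I would apply Theorem \ref{Vanishing_of_Higher_Cohomology} to each $(\KSpRes')_j$ with $\lambda = 2\rho(\lie{u}\cap\lie{p}) - 2\rho(\lie{u}_K)$, so that the twist $\lambda' = \lambda + 2\rho(\lie{u}_K) - 2\rho(\lie{u}\cap\lie{p}) = 0$. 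The trivial weight lies in $\mathbb{W}(Q_K)$ since the trivial representation is itself the required one-dimensional $Q_K$-stable subspace. Hence each summand vanishes for $i>0$.

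For (b), the corollary preceding Corollary \ref{Rational_normal} already provides, for each $j$, the vanishing $R^i(\mu_K)_*\mathcal{O}_{(\KSpRes')_j} = 0$ for $i>0$ together with $(\mu_K)_*\mathcal{O}_{(\KSpRes')_j} = \mathcal{O}_{(Y^j)^\nu}$; that is, each $(Y^j)^\nu$ has rational singularities. Since rational singularities is a local condition and $\mathcal{N}_\theta^\nu$ is the finite disjoint union of the $(Y^j)^\nu$, the normalization itself has rational singularities.

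For (c), let $\mathbb{O}$ be an even $K_\theta$-orbit. Since $K_\theta/K$ is finite, $\mathbb{O}$ decomposes as a finite disjoint union of $K$-orbits; each such $K$-component is again even, because evenness depends only on the $\ad(H)$-grading and the $\lie{sl}(2)$-triples chosen in different $K$-components are conjugate under $K_\theta$. The theory built up in the previous section (the $\theta$-stable parabolic from Jacobson--Morozov, the resolution $K \times_{Q_K}(\lie{u}\cap\lie{p})$, Corollary \ref{Nilpotent_Orbit_Case}, and Theorem \ref{Vanishing_of_Higher_Cohomology}) was established in the generality of any even $K$-orbit, so each $K$-component of $\mathbb{O}$ has normalized closure with rational singularities. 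The same disjoint-union argument as in (b) then yields the conclusion for $\mathbb{O}^\nu$. The main subtle point is precisely this bookkeeping in (c): verifying that an even $K_\theta$-orbit really does decompose into a finite disjoint union of even $K$-orbits and that normalization commutes with this decomposition. Once that is in place, everything else is an immediate appeal to results from the previous section.
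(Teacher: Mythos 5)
Your proof is correct and follows the paper's own argument: the Lasker--Noether disjoint-union decomposition of $\KSpRes$ and of $\mathcal{N}_\theta^\nu$ into components, followed by the per-component vanishing (Theorem \ref{Vanishing_of_Higher_Cohomology} with $\lambda'=0$ for part (a), and the immediately preceding corollary $R^i(\mu_K)_*\mathcal{O}_{\KSpRes'}=0$ for part (b)). The only material addition is your explicit treatment of part (c), which the paper leaves implicit; your observation that a $K_\theta$-orbit is a finite disjoint union of $K$-orbits lying in the same $G$-orbit (hence all even), together with the same Lasker--Noether argument applied to the irreducible components of $\overline{\mathbb{O}}$, is the right way to fill that gap.
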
 

\begin{remark} We note that this generalizes a result of Hinich \cite{Hinich1991} for complex groups. In this line, we believe that the $K_\theta$-nilpotent cone is also Gorenstein however a proof eludes us at this time. 
\end{remark} 

\section{Further Generalization}
If we have some additional properties of $G_\R,$ it is possible to extend these results in the following way:

\begin{definition}\label{Quasi_complex}
	Let $G$ be a connected complex reductive group Lie group. We say that $G_\R$ is of \textbf{quasi-complex type} (\textbf{QCT}), if the following hold: 
		\begin{description} 
			\item[G-1)] The $K$-nilpotent cone $\mathcal{N}_\theta$ is the closure of a single $K$-orbit. 
			\item[G-2)] All $K$-orbits on $\mathcal{N}_\theta$ are even dimensional. 
		\end{description}  
		
	If $G_\R$ only satisfies $\textbf{G-1}$ we say $G_\R$ is of $\textbf{quasi-adjoint type}$ or $\textbf{QAT}$ for short. \\
\end{definition} 	
\noindent Notice that \textbf{G-2} is equivalent to 
	\begin{description} 
		\item[G-2')] For any $K$-orbit $\mathcal{O}_{K}$ the associated complex $G$-orbit $\mathcal{O}_G$ has dimension divisible by 4. 
	\end{description} 
	
	\begin{remark} As shown above, if $G=GL(2n,\C)$ and $\theta$ chosen so $K=Sp(2n,\C)$, then it is of QCT. In particular, all simple complex groups are of QCT. We believe a complete list of groups in this class are as follows: all simple complex groups, $GL(n,\HH)$, $Sp(p,q)$, $SO^*(2n)$, and rank 1 $E_6.$ Morally, the classical real groups appearing here arise as a result of a "quaternionic" symplectic structure on on the complex $G$-orbits. \end{remark} 
	
	Finally, we have the following: 
		
\begin{theorem} 
	Let $G$ be QAT. Then the following hold: 
		\begin{enumerate} 
			\item If $Q=LU$ denotes the $\theta$-stable parabolic associated to the principal $K$-orbit on $\mathcal{N}_\theta,$ then \[\omega_{\KSpRes}\simeq \pi^*\mathcal{O}_{K/Q_K}(2\rho(\lie{u}\cap \lie{p})-2\rho(\lie{u}\cap \lie{k}))   \]
			\item For $\lambda'=\lambda-2\rho(\lie{u}\cap \lie{p})+2\rho(\lie{u}\cap \lie{k}) \in \mathbb{X}_+^*(L)$ we have \[H^i(\KSpRes,\mathcal{O}_{\KSpRes}(\lambda'))=0 \text{ for } i>0 \] 
			\item If $G$ is in fact $QCT$ then $\mathcal{N}_\theta$ is a complete intersection normal variety and \[ \textbf{R}(\mu_K)_*\mathcal{O}_{\KSpRes}=\mathcal{O}_{\mathcal{N}_\theta} \]
			In particular, $\mathcal{N}_\theta$ has rational singularities. 
		\end{enumerate} 
\end{theorem}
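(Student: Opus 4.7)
The plan is to reduce parts (1) and (2) immediately to the single-orbit case already established, and then to attack part (3) by showing $\mathcal{N}_\theta$ is itself a normal complete intersection (so that the normalization step in Corollary \ref{Rational_normal} is vacuous). For (1) and (2), the QAT hypothesis \textbf{G-1} identifies $\mathcal{N}_\theta$ with the closure $\overline{K\cdot X}$ of a single principal $K$-orbit, so $\KSpRes$ coincides with the $\KSpRes'$ of the previous section. Corollary \ref{Nilpotent_Orbit_Case} and Theorem \ref{Vanishing_of_Higher_Cohomology} then apply verbatim, delivering the canonical bundle computation and the vanishing of higher cohomology with no further work.

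For part (3), I would first invoke the invariant theory of Kostant--Rallis: the ring $\C[\lie{p}]^K$ is a polynomial algebra on $r=\dim \lie{a}$ homogeneous generators $f_1,\ldots,f_r$, where $\lie{a}\subseteq \lie{p}$ is a Cartan subspace, and $\mathcal{N}_\theta$ is the common vanishing locus of these invariants. To promote this set-theoretic description to a scheme-theoretic complete intersection presentation, the dimension identity $\dim \mathcal{N}_\theta=\dim \lie{p}-r$ must be established; by standard commutative algebra this forces $(f_1,\ldots,f_r)$ to be a regular sequence and makes $\C[\mathcal{N}_\theta]=\C[\lie{p}]/(f_1,\ldots,f_r)$ a complete intersection, hence Cohen--Macaulay. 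The QCT hypothesis \textbf{G-2} on even orbit dimensions (equivalently \textbf{G-2'} on divisibility by $4$ of associated $G$-orbits) is the structural input for this dimension count.

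Once the complete intersection property is secured, $\mathcal{N}_\theta$ satisfies Serre's condition $S_2$. To conclude normality via Serre's criterion I would verify $R_1$ by bounding the codimension of the singular locus. Using the evenness of orbit dimensions and the Lagrangian nature of $K$-orbits in their enveloping complex $G$-orbits, the subregular stratum of $\mathcal{N}_\theta$ should drop in dimension by at least two. With $\mathcal{N}_\theta$ thus normal, one has $\mathcal{N}_\theta^\nu = \mathcal{N}_\theta$, so $(\mu_K)_*\mathcal{O}_{\KSpRes}=\mathcal{O}_{\mathcal{N}_\theta}$, while $R^i(\mu_K)_*\mathcal{O}_{\KSpRes}=0$ for $i>0$ follows from Corollary \ref{Rational_normal}. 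Combining these yields $\textbf{R}(\mu_K)_*\mathcal{O}_{\KSpRes}=\mathcal{O}_{\mathcal{N}_\theta}$ and rational singularities of $\mathcal{N}_\theta$.

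The main obstacle will be the dimension count underlying the complete intersection step: this is where \textbf{G-2} must do substantive work rather than appear cosmetically. A uniform proof may require exploiting the quaternionic-symplectic picture hinted at in the remark above, and in the absence of such a uniform argument a case-by-case verification over the conjectural list of QCT groups ($GL(n,\HH)$, $Sp(p,q)$, $SO^*(2n)$, simple complex groups, and rank $1$ $E_6$) may be unavoidable. The $R_1$ codimension bound is the secondary difficulty, but should be manageable once the complete intersection structure clarifies the local geometry of $\mathcal{N}_\theta$ along the subregular stratum.
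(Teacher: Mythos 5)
The paper states this theorem without giving a proof, so there is nothing to compare against directly; I will assess your argument on its own merits. Your reduction of parts (1) and (2) is correct and essentially immediate: under \textbf{G-1}, $\mathcal{N}_\theta=\mathcal{N}_\theta'$ is the closure of a single principal $K$-orbit, and Corollary \ref{Nilpotent_Orbit_Case} and Theorem \ref{Vanishing_of_Higher_Cohomology} apply without modification.

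For part (3) your outline (Kostant--Rallis invariants, complete intersection, then Serre's criterion $R_1+S_2$) is a sensible route, but the roles of the two hypotheses are misallocated, and one step is glossed over. The dimension identity $\dim\mathcal{N}_\theta=\dim\lie{p}-r$ and the regular-sequence property are \emph{not} where \textbf{G-2} does work: these follow from the Kostant--Rallis theorem that $\C[\lie{p}]$ is a free module over $\C[\lie{p}]^K$, equivalently that the adjoint quotient $\chi:\lie{p}\to\lie{a}/W$ is (faithfully) flat with equidimensional fibres. This holds for any $\theta$, so the scheme $Z:=V(f_1,\ldots,f_r)$ is always a complete intersection, hence Cohen--Macaulay, hence $S_2$, with no appeal to \textbf{G-2}. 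You also pass from the set-theoretic description of $\mathcal{N}_\theta$ to $\C[\mathcal{N}_\theta]=\C[\lie{p}]/(f_1,\ldots,f_r)$ without addressing reducedness of $Z$; the correct argument is that $d\chi$ is surjective along the regular (principal) nilpotents by Kostant--Rallis, so $Z$ is smooth, and in particular reduced, on a dense open set, and Cohen--Macaulay plus generic reducedness (no embedded primes) gives that $Z$ is reduced everywhere. The genuine content of \textbf{G-2} is your secondary point, $R_1$: since all $K$-orbits have even dimension, the complement of the open orbit in $\mathcal{N}_\theta$ has codimension at least $2$, and this is precisely the codimension bound on the singular locus you need. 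Once $R_1+S_2$ give normality, $\mathcal{N}_\theta^\nu=\mathcal{N}_\theta$, and combining $(\mu_K)_*\mathcal{O}_{\KSpRes}=\mathcal{O}_{\mathcal{N}_\theta^\nu}=\mathcal{O}_{\mathcal{N}_\theta}$ with $R^i(\mu_K)_*\mathcal{O}_{\KSpRes}=0$ for $i>0$ from Corollary \ref{Rational_normal} delivers the last display and rational singularities, as you say. With the reducedness step supplied and the dimension count attributed to Kostant--Rallis flatness rather than to \textbf{G-2}, the case-by-case verification you feared is unnecessary.
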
 
\begin{remark} 
	Notice that if a real form $G_0$ of $G$ has a single conjugacy class of Cartan subgroups, then the shift in the canonical bundle is given by negating the sum of the (necessarily compact) imaginary roots.  
\end{remark} 

Using results on cohomological induction, we obtain the following characterization of global functions on $\mathcal{N}_\theta$: 

\begin{corollary} 
	Let $G$ be QCT. Then \[ \Gamma(\mathcal{N}_\theta,\mathcal{O}_{\mathcal{N}_\theta})|_K\cong A_\lie{q}(-2\rho(\lie{u}\cap \lie{p}))|_K \] 
	
\end{corollary}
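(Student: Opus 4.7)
The approach is to bridge the algebro-geometric sections computation with the representation-theoretic $K$-type formula for $A_\lie{q}(\mu)$. First, under QCT, part (3) of the preceding theorem gives $\textbf{R}(\mu_K)_*\mathcal{O}_{\KSpRes} = \mathcal{O}_{\mathcal{N}_\theta}$; combined with the affineness of $\mathcal{N}_\theta$ (so that higher sheaf cohomology vanishes) and Theorem \ref{Vanishing_of_Higher_Cohomology} at $\lambda' = 0$, the Leray spectral sequence collapses to a $K$-equivariant isomorphism
\[ \Gamma(\mathcal{N}_\theta, \mathcal{O}_{\mathcal{N}_\theta}) \cong \Gamma(\KSpRes, \mathcal{O}_{\KSpRes}). \]
Then, since $\pi_K : \KSpRes = K \times_{Q_K}(\lie{u}\cap\lie{p}) \to K/Q_K$ is an affine morphism with vector-bundle fibre, the pushforward is $\mathcal{L}_{K/Q_K}(\Sym^\bullet((\lie{u}\cap\lie{p})^*))$, and Frobenius reciprocity yields
\[ \Gamma(\KSpRes, \mathcal{O}_{\KSpRes}) \cong \Ind_{Q_K}^K \Sym^\bullet((\lie{u}\cap\lie{p})^*). \]

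Next I would invoke Vogan's bottom-layer $K$-type formula for cohomologically induced modules (see Knapp--Vogan, \emph{Cohomological Induction and Unitary Representations}, Chapter V). In the weakly good range,
\[ A_\lie{q}(\mu)|_K \cong \Ind_{L_K}^K\bigl[\mathbb{C}_{\mu + 2\rho(\lie{u}\cap\lie{p})} \tensor \Sym^\bullet((\lie{u}\cap\lie{p})^*)\bigr]. \]
Taking $\mu = -2\rho(\lie{u}\cap\lie{p})$ cancels the $\rho$-shift, and a standard filtration/associated-graded argument on the $U_K$-action identifies $\Ind_{L_K}^K$ with $\Ind_{Q_K}^K$ at the $K$-type level, matching the induced module produced in the first step.

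The main obstacle is justifying the use of that formula: the Knapp--Vogan identity is an honest equality (rather than only an identity of Euler characteristics) precisely when the higher Zuckerman derived functors $\mathcal{R}^j_\lie{q}(\mathbb{C}_\mu)$ vanish for $j \neq S$. Establishing this is where the geometric vanishing results of the preceding sections re-enter: under the Dolbeault-type dictionary relating $K$-equivariant cohomology on the $K$-Springer resolution to Zuckerman's $\Gamma$-functor on $K/Q_K$, Grauert--Riemenschneider vanishing (specialized via QCT) translates into exactly the algebraic vanishing required, while the evenness condition in the definition of QCT guarantees that no parity shift obstructs the translation. Once these two ingredients are in place, combining them assembles the stated isomorphism.
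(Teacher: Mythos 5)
Your proposal takes essentially the same route the paper does, just spelled out in much more detail. The paper's proof consists of two sentences asserting that the right-hand side describes $\Gamma(\KSpRes,\mathcal{O}_{\KSpRes})$ via the Vogan--Zuckerman $K$-type formula together with a multiplicity calculation, implicitly relying on the earlier result $\textbf{R}(\mu_K)_*\mathcal{O}_{\KSpRes} = \mathcal{O}_{\mathcal{N}_\theta}$ to connect this to $\Gamma(\mathcal{N}_\theta,\mathcal{O}_{\mathcal{N}_\theta})$. Your steps (1) and (2) make this geometric identification explicit, and your steps (3)--(4) make the Vogan--Zuckerman $K$-type computation explicit; that is precisely the skeleton the paper is compressing.

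One caveat on your final paragraph: you frame the ``main obstacle'' as vanishing of the higher Zuckerman functors $\mathcal{R}^j_\lie{q}(\C_\mu)$ for $j\neq S$, and then claim the paper's Grauert--Riemenschneider vanishing delivers this. These are not quite the same statement. The vanishing of $\mathcal{R}^j_\lie{q}(\C_\mu)$ at the $(\lie{g},K)$-module level is a good/fair-range condition on the infinitesimal character, whereas what the geometric vanishing in the paper actually controls is higher Bott--Borel--Weil cohomology of the $\Sym^\bullet((\lie{u}\cap\lie{p})^*)$-twists on $K/Q_K$ --- i.e., the $(\lie{k},K)$-side ingredient that turns the Blattner formula from an Euler characteristic into an honest direct sum of $K$-types. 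These two kinds of vanishing play different roles, and conflating them makes it look as though the geometric results do more heavy lifting at the $(\lie{g},K)$-level than they actually do. The argument still goes through, because the statement being proved is only an isomorphism of $K$-representations (both sides restricted to $K$), so it is the $(\lie{k},K)$-level vanishing that you actually need, and that the paper's Theorem~\ref{Vanishing_of_Higher_Cohomology} does supply. A cleaner formulation of your last step would therefore point directly to the $K$-equivariant cohomological vanishing on $K/Q_K$ and omit the appeal to $\mathcal{R}^j_\lie{q}(\C_\mu)$.
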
 
\begin{proof}
	The right hand side is a characterization of functions on $\KSpRes.$ This follows from a multiplicity calculation and the $K$-types of $A_\lie{q}(\lambda)$ modules (see \cite[Section 2]{VoganZuckerman1984}). 
\end{proof}

\bibliographystyle{alpha} 
\bibliography{Representation_Theory_references_Master}

\end{document}